\newtheorem{theorem}{Theorem}
\newtheorem{definition}[theorem]{Definition}
\newtheorem{lemma}[theorem]{Lemma}
\newtheorem{remark}[theorem]{Remark}
\newtheorem{proposition}[theorem]{Proposition}
\newenvironment{proof}[1][Proof]{\noindent\textit{#1.} }{\hfill $\Box$}
\newcommand{\OBSI}{\begin{remark}\begin{rm}}
\newcommand{\OBSF}{\end{rm}\end{remark}}
\newcommand{\DEFI}{\begin{definition}\begin{rm}}
\newcommand{\DEFF}{\end{rm}\end{definition}}
\newcommand{\be}{\begin{eqnarray}}
\newcommand{\en}{\end{eqnarray}}
\newcommand{\bee}{\begin{eqnarray*}}
\newcommand{\ene}{\end{eqnarray*}}
\newcommand{\CC}{\mathrm{C}}
\newcommand{\var}{\varepsilon}
\DeclareMathOperator*{\limK}{lim{\text{-}}\mathrm{K}}
\DeclareMathOperator*{\diam}{diam}
\DeclareMathOperator*{\essinf}{ess.inf}
\DeclareMathOperator*{\supp}{supp}
\begin{document}

\title{Generic zero-Hausdorff and one-packing  spectral measures}

\author{Silas L. Carvalho\thanks{Email: \texttt{silas@mat.ufmg.br}} \ \\\small Departamento de Matem\'atica, UFMG, Belo Horizonte, MG,  30161-970 Brazil \\ \\ and \\ \\ C\'esar  R. de Oliveira\thanks{Email: \texttt{oliveira@dm.ufscar.br}}\\ \small Departamento de Matem\'atica, UFSCar, S\~{a}o Carlos, SP,  13560--970 Brazil}

\maketitle





\begin{abstract} For some metric spaces of self-adjoint operators, it is shown that the set of operators whose spectral measures have simultaneously zero upper-Hausdorff and one lower-packing dimensions contains a dense~$G_\delta$ subset. Applications include sets of limit-periodic operators.  
\end{abstract}
{Key words and phrases}.  {\em Self-adjoint operators, spectral measures,  upper-Hausdorff dimension, lower-packing dimension.}

\maketitle

\section{Introduction} Let $(X,d)$ be a complete metric space of self-adjoint operators acting in a separable Hilbert sapce~$\mathcal{H}$, such that convergence in the metric~$d$ implies strong resolvent convergence.  In three previous papers~\cite{SCC,SCD,SCP}, the present authors have discussed several  generic sets of families of self-adjoint operators, in some instances of the space $(X,d)$, in terms of not only spectral properties, but also of dynamical ones. In such works we have gotten, through different grounds, generic sets of operators with one-dimensional packing spectral measures, but an argument for Hausdorff dimensional properties was missing; it is one of the goals of this work to fill up this gap by presenting a result in terms of what we call {\it fractal dimensions} of the spectrum: we give contributions related to the upper-Hausdorff and lower-packing dimensions of spectral measures. In~\cite{dRJLS} one may find a way to the theory of singularly continuous spectra, dimensions and some of its perturbations.



Although it is already known that, for some families of self-adjoint operators, a typical (in Baire's sense) spectral measure has upper-packing dimension equal to one (see Theorem~1.1 in~\cite{SCP}), we improve such result, in the sense that now the same result is valid for the lower-packing dimension; however, as mentioned before, there was no generic result  about the (upper or lower) Hausdorff dimension. The novel technical argument, encapsulated in Theorem~\ref{P2}, gives information about upper-Hausdorff dimensional properties of spectral measures; since it is immediate to adapt such ideas to obtain the counterpart lower-packing properties, we just present the details of the first case.  It is also important to underline that {\it every} application of the so-called Wonderland Theorem discussed in~\cite{SimonWT}, presenting dense sets of operators with pure point spectrum or absolutely continuous spectrum, can now be converted into a result about the existence of a generic set of operators whose spectral measures are zero upper-Hausdorff and one lower-packing dimensional. The upper-Hausdorff dimension of a Borel measure~$\mu$ will be denoted by $\dim_{\mathrm{H}}^+(\mu)$, whereas its lower-packing  dimension by $\dim_{\mathrm{P}}^-(\mu)$ (such concepts are recalled in Definition~\ref{defDimMu}).

\subsection{Main results}

Next, we present our main result. It should be compared with Theorem~2.1 in~\cite{SimonWT}.

\

\begin{theorem} \label{thmFDW} Let~$0\neq\psi\in\mathcal{H}$, let 
  $\emptyset\neq F\subset\mathbb{R}$ be a closed set and suppose that each of the sets 
\begin{itemize}
\item $C^{\psi;F}_{0{\mathrm{Hd}}}=\{T\in X\mid\dim_{\mathrm{H}}^+(\mu_{\psi;F}^{T})=0\}$, 
\item $C^{\psi;F}_{1{\mathrm{Pd}}}=\{T\in X\mid\dim_{\mathrm{P}}^-(\mu_{\psi;F}^{T})=1\}$,
\end{itemize}
\noindent is dense in~$X$. 
Then, the set $\{T\in X\mid\dim_{\mathrm{H}}^+(\mu_{\psi;F}^T)=0\;\;\mathrm{and}\;\dim_{\mathrm{P}}^-(\mu_{\psi;F}^T)=1\}$ is generic in~$X$.
\end{theorem}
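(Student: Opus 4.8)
\medskip

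\noindent\textit{Proof proposal.}\ The plan is the usual Wonderland scheme. I would first show that each of $C^{\psi;F}_{0\mathrm{Hd}}$ and $C^{\psi;F}_{1\mathrm{Pd}}$ is a $G_\delta$ subset of $X$, so that the assumed density promotes each of them to a \emph{dense} $G_\delta$; since $(X,d)$ is complete, Baire's theorem then yields that the intersection $C^{\psi;F}_{0\mathrm{Hd}}\cap C^{\psi;F}_{1\mathrm{Pd}}$ --- which is exactly the set in the statement --- is again a dense $G_\delta$, hence generic. With density granted, the whole matter reduces to the two $G_\delta$ assertions. For these I would pass to the local (scaling-exponent) descriptions of the dimensions: writing $\underline{\gamma}_\mu(x)=\liminf_{r\downarrow0}\log\mu(B(x,r))/\log r$ and $\overline{\gamma}_\mu(x)=\limsup_{r\downarrow0}\log\mu(B(x,r))/\log r$, one has $\dim_{\mathrm{H}}^+(\mu)=\esssup_\mu\underline{\gamma}_\mu$ and $\dim_{\mathrm{P}}^-(\mu)=\essinf_\mu\overline{\gamma}_\mu$, the latter being always at most $1$ for a measure on $\mathbb{R}$; combining this with the Rogers--Taylor description of the $h^\alpha$-singular part one obtains the countable decompositions
$$C^{\psi;F}_{0\mathrm{Hd}}=\bigcap_{\alpha\in\mathbb{Q}\cap(0,1]}\bigl\{T\in X\mid\mu^T_{\psi;F}\text{ is }h^\alpha\text{-singular}\bigr\},\qquad C^{\psi;F}_{1\mathrm{Pd}}=\bigcap_{\alpha\in\mathbb{Q}\cap(0,1)}\bigl\{T\in X\mid\mu^T_{\psi;F}\text{ is }p^\alpha\text{-singular}\bigr\},$$
where ``$\mu$ is $p^\alpha$-singular'' abbreviates $\mu(\{\overline{\gamma}_\mu\le\alpha\})=0$, the packing counterpart of $h^\alpha$-singularity. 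It therefore suffices to show that, for each fixed $\alpha$, the sets $\{T\mid\mu^T_{\psi;F}\text{ is }h^\alpha\text{-singular}\}$ and $\{T\mid\mu^T_{\psi;F}\text{ is }p^\alpha\text{-singular}\}$ are $G_\delta$ in $X$; these are the content of Theorem~\ref{P2} and of its lower-packing analogue.

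The technical heart, and what I expect to be the real obstacle, is precisely this $G_\delta$ claim: one must squeeze out of ``$d$-convergence implies strong resolvent convergence'' --- which delivers only weak convergence $\mu^{T_n}_\psi\to\mu^T_\psi$ of the spectral measures, a mode of convergence that retains essentially no information about local scaling --- enough semicontinuity to pin down a genuinely \emph{fractal} spectral quantity, rather than the merely topological dichotomy (pure point versus absolutely continuous, i.e.\ the case $\alpha=1$) handled by the classical Wonderland Theorem~\cite{SimonWT}. For the Hausdorff case the device I would use is to rephrase ``$\mu$ is $h^\alpha$-singular'' --- equivalently, $\mu$ is carried by a Borel set of Hausdorff dimension at most $\alpha$ --- in a form witnessed by finitely many data: using that a set has zero $\alpha$-dimensional Hausdorff measure iff it has arbitrarily small $\alpha$-content, together with continuity of $\mu$ along increasing unions, one checks that $\mu$ is $h^\alpha$-singular if and only if for every $k\in\mathbb{N}$ there is a \emph{finite} union $U$ of open intervals with rational endpoints such that $\sum_i|I_i|^\alpha<1/k$ and $\mu(\mathbb{R}\setminus U)<1/k$. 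The gain is that, for such $U$, the set $F\cap(\mathbb{R}\setminus U)$ is \emph{closed}, so by the portmanteau theorem $T\mapsto\mu^T_\psi\bigl(F\cap(\mathbb{R}\setminus U)\bigr)=\mu^T_{\psi;F}(\mathbb{R}\setminus U)$ is upper semicontinuous; hence for each admissible $U$ the set $\{T\mid\mu^T_{\psi;F}(\mathbb{R}\setminus U)<1/k\}$ is open, and
$$\bigl\{T\in X\mid\mu^T_{\psi;F}\text{ is }h^\alpha\text{-singular}\bigr\}=\bigcap_{k\in\mathbb{N}}\ \bigcup_{U}\ \bigl\{T\in X\mid\mu^T_{\psi;F}(\mathbb{R}\setminus U)<\tfrac1k\bigr\},$$
the inner union running over the (countably many) finite families of open rational intervals with $\sum_i|I_i|^\alpha<1/k$, displays this set as a $G_\delta$. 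The lower-packing statement follows by the formally dual manipulation --- packings in place of coverings, $\limsup$ in place of $\liminf$ in the local exponent --- which is why, as announced in the Introduction, only the Hausdorff case need be written out in detail.

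Two further points need attention but are routine in spirit. First, $\mu^T_{\psi;F}$ is the restriction of the spectral measure $\mu^T_\psi$ to the possibly non-open set $F$, and its total mass $\mu^T_\psi(F)$ varies with $T$; this is precisely what is sidestepped by only ever testing against the \emph{closed} sets $F\cap(\mathbb{R}\setminus U)$ and by the hypotheses being phrased intrinsically through $\mu^T_{\psi;F}$, so that the degenerate case $\mu^T_\psi(F)=0$ causes no trouble. Second, one should check that the passage from arbitrary countable coverings to finite rational ones preserves the strict inequalities; this is handled by replacing, at each step, the tolerance $1/k$ by $1/(2k)$ and then enlarging the intervals slightly. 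Granting Theorem~\ref{P2} and its analogue, the proof closes in one line via the Baire argument of the first paragraph.
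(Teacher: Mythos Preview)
Your overall architecture matches the paper exactly: reduce Theorem~\ref{thmFDW} to the two $G_\delta$ assertions (the paper's Theorem~\ref{P2}), then conclude by Baire. Your Hausdorff $G_\delta$ argument, however, is \emph{different} from the paper's and in fact more elementary. The paper works through the pointwise upper density $D^{\mathrm H,\alpha}_\mu$, builds the auxiliary functions $\gamma^{\mathrm H,\alpha}_{\psi,T}(x,s)=\sup_{t\ge s}t^\alpha U_{t,\psi}(T,x)$, and proves a delicate closedness lemma (Lemma~\ref{LIMP}) for sets of the form $\{\mu:\mu(Z_\mu(r,s))\ge\delta\}$, where the test set $Z_\mu$ itself depends on $\mu$; this requires a tightness/compactness argument in $\mathcal M_+(E)\times E$. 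You bypass all of that by observing that $h^\alpha$-singularity can be witnessed by $\mu$-\emph{independent} test sets --- finite unions of rational intervals of small $\alpha$-content --- and that $T\mapsto\mu^T_\psi(F\setminus U)$ is upper semicontinuous for each such $U$ by portmanteau. This is a genuine simplification for the Hausdorff half.

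The gap is the packing half. Your proof that $\{T:\mu^T_{\psi;F}\text{ is }h^\alpha\text{-singular}\}$ is $G_\delta$ rests on the equivalence ``$\mu$ is $h^\alpha$-singular $\Leftrightarrow$ $\mu$ is carried by a Borel set admitting arbitrarily efficient covers'', which supplies a countable family of \emph{fixed} open sets $U$ against which to test $\mu$. The condition you need for $\dim_{\mathrm P}^-(\mu)=1$, namely $\mu(\{x:\overline\gamma_\mu(x)\le\alpha\})=0$ for each $\alpha<1$, has no such description: the set $\{\overline\gamma_\mu\le\alpha\}$ is defined through the local scaling of $\mu$ itself, not through any $\mu$-independent cover or packing, and packing measure has the extra outer-regularisation layer that destroys the simple content--cover correspondence you exploit. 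So ``packings in place of coverings'' does not formally dualise your argument. The reason the paper can say the packing case is analogous is that \emph{its} proof is built on $D^{\mathrm K,\alpha}_\mu$, and swapping $\limsup$ for $\liminf$ (equivalently $\sup_{t\ge s}$ for $\inf_{t\ge s}$) merely trades lower for upper semicontinuity of $(\mu,x)\mapsto\gamma^{\mathrm K,\alpha}_\mu(x,s)$; Lemma~\ref{LIMP} then goes through with the inequality reversed. To close your proof you must either import the paper's local-density machinery for the packing half, or find a genuinely different packing-specific argument --- the covering trick alone will not do it.
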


As an illustration, we consider an application to a class of  bounded discrete Schr\"odinger operators acting on~$l^2(\mathbb{Z})$. For a fixed~$r>0$, let~$X^r$ be the set of operators~$T$ with action 
\begin{equation}\label{OSch1}
(T\psi)_n=\psi_{n+1}+\psi_{n-1}+V_n\psi_n\,,
\end{equation}
where the potential $v=(V_n)$ is an arbitrary real bilateral sequence with $|V_n|\le r$ for every~$n\in\mathbb{Z}$. Let $\sigma(T)$ and $\mu^T_\psi$ denote the spectrum of~$T$ and its spectral measure (associated with the vector~$0\neq\psi\in l^2(\mathbb{Z})$), respectively. 
By combining Theorem~\ref{thmFDW} with a specific construction presented in the proof of Theorem~4.1 in~\cite{SimonWT}, we obtain the following result.

\begin{theorem} \label{theorMainSO}
Fix $r>0$.  The set $\{T\in X^r\mid\sigma(T)=[-2-r,2+r]$, $\dim_{\mathrm{H}}^+(\mu_\psi^T)=0$, $\dim_{\mathrm{P}}^-(\mu_\psi^T)=1$, for all~$0\neq\psi\in\mathcal{H}\}$ is generic in~$X^r$.
\end{theorem}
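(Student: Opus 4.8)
The plan is to derive Theorem~\ref{theorMainSO} from Theorem~\ref{thmFDW} together with the density statements contained in the proof of Theorem~4.1 of~\cite{SimonWT}. First I would turn $X^r$ into a complete metric space by setting $d(T,T')=\sum_{n\in\mathbb{Z}}2^{-|n|}|V_n-V_n'|$, where $(V_n),(V_n')$ are the potentials of $T,T'$; then $(X^r,d)$ is complete because a $d$-Cauchy sequence of potentials converges pointwise to a real sequence still bounded by $r$, and $d(T_k,T)\to0$ forces $V_n^{(k)}\to V_n$ for each $n$, hence $T_k\to T$ strongly on the dense set of finitely supported vectors, and since $\sup_k\|T_k\|\le 2+r$ this yields strong resolvent convergence. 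I would also note that $\sigma(T)\subseteq[-2-r,2+r]=:I_r$ for every $T\in X^r$, so every spectral measure is carried by $I_r$ and, on taking $F=I_r$ in Theorem~\ref{thmFDW}, one has $\mu^T_{\psi;F}=\mu^T_\psi$; write $(\delta_n)_{n\in\mathbb{Z}}$ for the canonical orthonormal basis of $l^2(\mathbb{Z})$.

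Next I would extract from Simon's construction three facts about $(X^r,d)$: (a) $\mathcal{P}:=\{T\in X^r:T\text{ has pure point spectrum}\}$ is dense; (b) $\mathcal{A}:=\{T\in X^r:T\text{ has purely absolutely continuous spectrum}\}$ is dense, since it contains every operator whose potential is periodic, such potentials being $d$-dense in $X^r$ while periodic operators have purely absolutely continuous spectrum; (c) $\{T\in X^r:\sigma(T)=I_r\}$ contains a dense $G_\delta$ of $X^r$. Fixing $n\in\mathbb{Z}$, I would take $\psi=\delta_n$ and $F=I_r$ in Theorem~\ref{thmFDW} and observe that for $T\in\mathcal{P}$ the measure $\mu^T_{\delta_n}$ is purely atomic, so $\dim_{\mathrm{H}}^+(\mu^T_{\delta_n})=0$, whence $\mathcal{P}\subseteq C^{\delta_n;I_r}_{0{\mathrm{Hd}}}$, while for $T\in\mathcal{A}$ the measure $\mu^T_{\delta_n}$ is absolutely continuous, so $\dim_{\mathrm{P}}^-(\mu^T_{\delta_n})=1$, whence $\mathcal{A}\subseteq C^{\delta_n;I_r}_{1{\mathrm{Pd}}}$. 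Both density hypotheses of Theorem~\ref{thmFDW} thus hold, and it follows that $G_n:=\{T\in X^r:\dim_{\mathrm{H}}^+(\mu^T_{\delta_n})=0\text{ and }\dim_{\mathrm{P}}^-(\mu^T_{\delta_n})=1\}$ is generic in $X^r$.

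The step I expect to be the main obstacle is upgrading ``for the vector $\delta_n$'' to ``for every $0\neq\psi$'', since Theorem~\ref{thmFDW} and the density inputs only control one vector at a time and an uncountable intersection of generic sets need not be generic. I would handle this using that $\operatorname{span}\{\delta_n:n\in\mathbb{Z}\}$ is dense in $l^2(\mathbb{Z})$: for any Borel $E\subseteq\mathbb{R}$ and any $T\in X^r$, if $\mu^T_{\delta_n}(E)=0$ for all $n$ then the spectral projection $\chi_E(T)$ kills every $\delta_n$, so $\chi_E(T)=0$, so $\mu^T_\psi(E)=0$ for all $\psi$. Combining this with the facts that $\dim_{\mathrm{H}}^+(\mu)=\inf\{\dim_{\mathrm{H}}E:\mu(\mathbb{R}\setminus E)=0\}$ and $\dim_{\mathrm{P}}^-(\mu)=\inf\{\dim_{\mathrm{P}}E:\mu(E)>0\}$ and that Hausdorff dimension is countably stable, one gets
\[
\bigcap_{n\in\mathbb{Z}}\bigl\{T:\dim_{\mathrm{H}}^+(\mu^T_{\delta_n})=0\bigr\}=\bigl\{T:\dim_{\mathrm{H}}^+(\mu^T_\psi)=0\text{ for every }0\neq\psi\bigr\},
\]
together with the analogous identity for the condition ``$\dim_{\mathrm{P}}^-(\cdot)=1$''. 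Hence the set in the statement equals $\{T\in X^r:\sigma(T)=I_r\}\cap\bigcap_{n\in\mathbb{Z}}G_n$, a countable intersection of dense $G_\delta$ subsets of the complete metric space $(X^r,d)$, and is therefore generic in $X^r$ by Baire's theorem. The remaining points --- completeness and strong resolvent convergence, the extraction of (a)--(c) from~\cite{SimonWT}, and the inclusions $\mathcal{P}\subseteq C^{\delta_n;I_r}_{0{\mathrm{Hd}}}$, $\mathcal{A}\subseteq C^{\delta_n;I_r}_{1{\mathrm{Pd}}}$ --- are routine.
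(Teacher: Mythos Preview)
Your proof is correct and follows essentially the same route as the paper: apply Theorem~\ref{thmFDW} vector by vector, using the density of pure point and absolutely continuous operators from Simon's construction, then pass from a countable family of vectors to all $\psi\neq 0$, and finally intersect with the generic set $\{T:\sigma(T)=[-2-r,2+r]\}$. The only noteworthy difference is in the reduction step: the paper (see Remark~\ref{RIMP}) uses that $\{\delta_0,\delta_1\}$ is a common cyclic set for discrete Schr\"odinger operators, so that every $\mu^T_\psi$ is absolutely continuous with respect to $\mu^T_{\delta_0}+\mu^T_{\delta_1}$, whereas you use the whole basis $(\delta_n)_{n\in\mathbb Z}$ together with the spectral-projection argument $\mu^T_{\delta_n}(E)=0\ \forall n\Rightarrow\chi_E(T)=0$; your version is slightly more elementary (it avoids invoking the Schr\"odinger-specific cyclicity) at the modest cost of a larger, but still countable, intersection.
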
       

\OBSI
\label{RIMP}
A well-known fact about discrete Schr\"odinger operators in~$l^2(\mathbb{Z})$, with action~\eqref{OSch1}, is the existence of a common set of cyclic vectors $\{\delta_{0},\delta_1\}$. Now, if for $\zeta\in\{\delta_{0},\delta_1\}$ the spectral measure $\mu^T_{\zeta}$ is zero upper-Hausdorff dimensional, then $\mu^T_{\psi}$ is zero upper-Hausdorff dimensional for every vector $\psi\ne0$ (namely, since $\mu^T_{\zeta}$ is supported on a set of zero Hausdorff dimension and since, for every $\psi\ne0$,  $\mu^T_{\psi}$ is absolutely continuous with respect to $\mu^T_\zeta$, then $\mu^T_{\psi}$ is also supported on a set of zero Hausdorff dimension), which implies that~$\{T\in X\mid\sigma(T)$ is purely zero upper-Hausdorff dimensional$\}$ is a~$G_\delta$ set (the same conclusion is valid for~$\{T\in X\mid\sigma(T)$ is purely one lower-packing dimensional$\}$). Thus, the results stated in Theorem~\ref{theorMainSO} are obtained after showing that the set $\{T\in X^r\mid\sigma(T)=[-2-r,2+r]$, $\dim_{\mathrm{H}}^+(\mu_\psi^T)=0$ and $\dim_{\mathrm{P}}^-(\mu_\psi^T)=1$, for each fixed $0\neq\psi\in\mathcal{H}\}$, is generic in~$X^r$. This is actually what one gets combining Theorem~\ref{thmFDW} with the aforementioned result in~\cite{SimonWT}.
\OBSF

We also apply our results to a class of limit-periodic operators; these are discrete one-dimensional ergodic Schr\"odinger operators, denoted by $H^\kappa_{g,\tau}$, acting in $l^2(\mathbb{Z})$, whose action is given by~\eqref{OSch1}, with 
\begin{equation}\label{VF}
V_n(\kappa)=g(\tau^n(\kappa))\,;
\end{equation} \noindent here, $\kappa$ belongs to a Cantor group~$\Omega$, $\tau:\Omega\rightarrow\Omega$ is a minimal translation on~$\Omega$ and $g:\Omega\rightarrow\mathbb{R}$ is a continuous sampling function, i.e., $g\in\CC(\Omega,\mathbb R)$, the latter endowed  with the norm of uniform convergence. For more details, see~\cite{Avila}.

For each~$\kappa\in\Omega$, let $X_{\kappa}$ be the set of limit-periodic operators $H_{g,\tau}^\kappa$ given by \eqref{OSch1} and \eqref{VF}, endowed with the metric 
\begin{equation}\label{metric3}
d(H_{g,\tau}^\kappa,H_{g',\tau}^\kappa)=\|g-g'\|_\infty\;.
\end{equation} We shall prove the following result.

\begin{theorem}\label{LPO}
  For each $\kappa\in\Omega$, the set~$\{T\in X_\kappa\mid\sigma(T)$ is purely zero upper-Hausdorff and one lower-packing dimensional$\}$ 
  is generic in~${X}_{\kappa}$.
\end{theorem}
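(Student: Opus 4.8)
The plan is to derive this result directly from Theorem~\ref{thmFDW} applied to the metric space $X_\kappa$ equipped with the metric~\eqref{metric3}, together with the density statement for limit-periodic operators that follows from the Wonderland-type constructions in the ergodic setting. First I would check the standing hypotheses: $(X_\kappa,d)$ is a complete metric space (it is isometric to a closed subspace of $\CC(\Omega,\mathbb{R})$ via $g\mapsto H_{g,\tau}^\kappa$, and uniform convergence of the sampling functions $g_k\to g$ yields uniform convergence of the potentials $V_n(\kappa)=g(\tau^n\kappa)$, hence norm convergence of the operators, which in particular implies strong resolvent convergence). As noted in Remark~\ref{RIMP}, the pair $\{\delta_0,\delta_1\}$ is a common family of cyclic vectors for every operator of the form~\eqref{OSch1}, so it suffices to prove genericity of the set of $T\in X_\kappa$ for which $\dim_{\mathrm{H}}^+(\mu_\psi^T)=0$ and $\dim_{\mathrm{P}}^-(\mu_\psi^T)=1$ for each fixed $\psi\in\{\delta_0,\delta_1\}$ (absolute continuity of $\mu_\psi^T$ with respect to $\mu_{\delta_j}^T$ promotes the zero-Hausdorff property to all nonzero $\psi$; the analogous monotonicity argument handles the one-packing property), and the finite intersection over $\psi\in\{\delta_0,\delta_1\}$ of dense $G_\delta$ sets is again dense $G_\delta$.

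By Theorem~\ref{thmFDW} (with $F=\mathbb{R}$), the only thing left to verify is that, for each fixed $0\neq\psi\in l^2(\mathbb{Z})$, the two sets $C^{\psi;\mathbb{R}}_{0\mathrm{Hd}}=\{T\in X_\kappa\mid \dim_{\mathrm H}^+(\mu_\psi^T)=0\}$ and $C^{\psi;\mathbb{R}}_{1\mathrm{Pd}}=\{T\in X_\kappa\mid \dim_{\mathrm P}^-(\mu_\psi^T)=1\}$ are each dense in $X_\kappa$. For density of $C^{\psi;\mathbb{R}}_{0\mathrm{Hd}}$: the periodic sampling functions (those $g$ that factor through a finite quotient group of $\Omega$, equivalently the periodic potentials arising from the minimal translation $\tau$) are dense in $\CC(\Omega,\mathbb{R})$; for a periodic potential the operator $H_{g,\tau}^\kappa$ has purely absolutely continuous spectrum, so $\mu_\psi^T$ is equivalent to Lebesgue measure on a finite union of compact intervals and in particular is zero-dimensional in the upper-Hausdorff sense — wait, that is false, an a.c. measure has upper-Hausdorff dimension one. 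So instead I must feed in operators with sparse/point-like spectral measures: one uses the density, in $\CC(\Omega,\mathbb{R})$, of sampling functions $g$ for which $H_{g,\tau}^\kappa$ has pure point spectrum with eigenfunctions decaying fast enough that $\mu_\psi^T$ is supported on a countable (hence zero-Hausdorff-dimensional) set. Such operators are produced exactly by the construction behind Theorem~4.1 in~\cite{SimonWT} adapted to the limit-periodic class (approximating by periodic potentials and then perturbing within a small $\|\cdot\|_\infty$-ball to localize), which simultaneously can be arranged to give, along a complementary sequence of approximants, purely singular continuous measures with lower-packing dimension one; this yields the density of $C^{\psi;\mathbb{R}}_{1\mathrm{Pd}}$ as well.

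The main obstacle is the density of the two input sets inside the \emph{limit-periodic} class specifically: unlike the general $X^r$ of Theorem~\ref{theorMainSO}, here one is not free to choose an arbitrary bounded potential but must realize everything through continuous sampling functions on the fixed Cantor group $\Omega$ with the fixed minimal translation $\tau$. I would resolve this by invoking the known flexibility of the limit-periodic class — every limit-periodic potential is a uniform limit of periodic ones, and conversely periodic potentials are realized as sampling functions through finite quotients of $\Omega$ — so that the perturbative localization/delocalization arguments of~\cite{SimonWT}, which only require small $\|\cdot\|_\infty$ modifications of a periodic base potential, stay inside $X_\kappa$. Once density of $C^{\psi;\mathbb{R}}_{0\mathrm{Hd}}$ and $C^{\psi;\mathbb{R}}_{1\mathrm{Pd}}$ is established for $\psi\in\{\delta_0,\delta_1\}$, Theorem~\ref{thmFDW} gives genericity of the intersection, the cyclicity remark upgrades it to all $\psi\neq0$ and to a statement about the spectrum $\sigma(T)$, and the proof is complete. $\Box$
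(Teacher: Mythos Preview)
Your overall framework is correct: reduce to the cyclic vectors via Remark~\ref{RIMP}, then apply Theorem~\ref{thmFDW} with $F=\mathbb{R}$, so that only the density of $C^{\psi;\mathbb{R}}_{0\mathrm{Hd}}$ and $C^{\psi;\mathbb{R}}_{1\mathrm{Pd}}$ in $X_\kappa$ remains to be checked. The gap is in how you supply these two density inputs.

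For $C^{\psi;\mathbb{R}}_{1\mathrm{Pd}}$ you actually had the right idea and then abandoned it. Periodic sampling functions are dense in $\CC(\Omega,\mathbb R)$ and give purely absolutely continuous spectrum; an absolutely continuous measure is $1\mathrm{Pd}$ (Remark~\ref{Obs}), hence has lower-packing dimension~$1$. That is exactly the argument the paper uses (via Theorem~\ref{TDG}(1), i.e.\ \cite{DG}). Your replacement --- ``singular continuous measures with lower-packing dimension one'' produced along some complementary approximant sequence --- is neither needed nor justified: singular continuity by itself says nothing about packing dimension.

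For $C^{\psi;\mathbb{R}}_{0\mathrm{Hd}}$ there is a genuine missing ingredient. You invoke ``the construction behind Theorem~4.1 in~\cite{SimonWT} adapted to the limit-periodic class,'' but Simon's perturbations producing pure point spectrum use \emph{arbitrary} bounded potentials and do not, as written, stay inside $\CC(\Omega,\mathbb R)$ under the fixed minimal translation~$\tau$. Your last paragraph identifies exactly this obstacle but does not resolve it: the claim that ``small $\|\cdot\|_\infty$ modifications of a periodic base potential stay inside $X_\kappa$'' is true only if the modification is itself a continuous sampling function on~$\Omega$, and Simon's localizing perturbations are not of that form. The paper closes this gap by citing the Damanik--Gan result (Theorem~\ref{TDG}(2), from~\cite{DG1}), which establishes directly that sampling functions with zero-Hausdorff-dimensional spectrum are dense in $\CC(\Omega,\mathbb R)$. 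That external input is essential here and is what your proposal is missing.
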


\subsection{Countable families of pairwise commuting self-adjoint operators}

We remark that is possible to extend the result stated in Theorem~\ref{thmFDW} for countable families of pairwise commuting self-adjoint operators $T=(T_1,\ldots,T_N)$ acting in a separable Hilbert space~$\mathcal{H}$. The joint resolution of identity is given by~$E(\cdot):=\prod_{j=1}^NE_j(\cdot)$ over the rectangles of the Borel sets~$\mathcal{B}(\mathbb{R}^{N})$; here,  $N$ stands for a natural number or (countable) infinite, and  $E_j(\cdot)$ is the resolution of identity of~$T_j$. For each fixed~$\psi\in\mathcal{H}$ with~$\Vert\psi\Vert=1$, the support of the spectral measure~$\mu_\psi^T(\cdot):=\langle\psi,E(\cdot)\psi\rangle$, denoted by~$\supp(\mu_\psi^T)$, is the intersection of all closed subsets of~$\mathbb{R}^N$ with full~$\mu_\psi^T$ measure ($\mathbb{R}^{\mathbb N}$ with the product topology). We also set $J_N=\{1,2,\, \cdots,N\}$ if~$N\in\mathbb N$, and $J_N=\mathbb N$ in case~$N=\infty$.

\DEFI\label{DIM}
Let~$K$ denote either~$H$ or~$P$, for Hausdorff or packing, respectively. Let~$\mu$ be a probability product-measure  on the Borel sets $(\mathbb{R}^{N};\mathcal{B}(\mathbb{R}^{N}))$ given by $\mu(\cdot)=\prod_{n=1}^N\mu_{n}(\cdot)$. Let~$I=\prod_{n=1}^NI_n\in\mathcal{B}(\mathbb{R}^{{N}})$ be a measurable rectangle. One says that $\dim_{\mathrm{K}}^{\pm}(\mu)$ is minimal if, for each~$n\in J_N$, $\dim_{\mathrm{K}}^{\pm}(\mu_n)=0$. Accordingly, one says that~$\dim_{\mathrm{K}}^{\pm}(\mu)$ is maximal if, for each~$n\in J_N$,~$\dim_{\mathrm{K}}^{\pm}(\mu^n)=n$, where 
\[
\mu^n:=\prod_{k=1}^n\mu_k.
\] 
\DEFF

Denote by~$X$ the collection of such families of countable sequences of pairwise commuting self-adjoint operators, and 
let $d$ be any metric in~$X$ whose convergence implies, for each~$k\in J_N$, strong resolvent convergence; one could set, for instance, 
\[d(T,T^\prime):=\sup_{k\in J_N}D(T_k,T_k^\prime),\]  where 
\[
D(T_k,T_k^\prime):=\sum_{l\ge 1}\min(2^{-l},\Vert (T_k-T_k^\prime)\xi_l\Vert)
\] ($(\xi)_{l\ge 1}$ is an orthonormal basis of~$\mathcal{H}$). Naturally, $(X,d)$ is a complete metric space. The following result is the natural extension of Theorem~\ref{thmFDW} to this setting. 

\begin{theorem} \label{thmFDW1} Let~$\psi\in\mathcal{H}$, with~$\Vert\psi\Vert=1$, let for each $j\in J_N$, $\emptyset\neq F_j$ be a closed set and put $F:=\prod_{j=1}^{J_N}F_j$. Suppose that each of the sets 
\begin{itemize}
\item $C^{\psi;F}_{{\mathrm{min}}}=\{T\in X\mid\dim_{\mathrm{H}}^+(\mu_{\psi;F}^{T})$ is minimal$\}$, 
\item $C^{\psi;F}_{{\mathrm{max}}}=\{T\in X\mid\dim_{\mathrm{P}}^-(\mu_{\psi;F}^{T})$ is maximal$\}$,
\end{itemize}
\noindent is dense in~$X$. 
Then, the set $\{T\in X\mid\dim_{\mathrm{H}}^+(\mu_{\psi;F}^T)$ is minimal and $\dim_{\mathrm{P}}^-(\mu_{\psi;F}^T)$ is maximal$\}$ is generic in~$X$.
\end{theorem}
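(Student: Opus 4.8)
The plan is to run, one coordinate at a time, the same Baire-category argument that proves Theorem~\ref{thmFDW}. Writing $\mu=\mu^T_{\psi;F}=\prod_{n}\mu_n$ for the product representation of Definition~\ref{DIM} and $\mu^n=\prod_{k\le n}\mu_k$, that definition yields
\[
C^{\psi;F}_{\mathrm{min}}=\bigcap_{n\in J_N}\big\{T\in X\mid\dim_{\mathrm H}^+(\mu_n)=0\big\}
\quad\text{and}\quad
C^{\psi;F}_{\mathrm{max}}=\bigcap_{n\in J_N}\big\{T\in X\mid\dim_{\mathrm P}^-(\mu^n)=n\big\},
\]
both of which are countable intersections. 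It therefore suffices to prove that every set appearing on these right-hand sides is a $G_\delta$ subset of $(X,d)$: granting this, $C^{\psi;F}_{\mathrm{min}}$ and $C^{\psi;F}_{\mathrm{max}}$ are $G_\delta$, they are dense by hypothesis, and hence, by Baire's theorem, so is their intersection---which is exactly the set in the statement---, which gives the claimed genericity.

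First I would handle the sets $\{T\mid\dim_{\mathrm H}^+(\mu_n)=0\}$. The projection $T=(T_k)_k\mapsto T_n$ is continuous on $(X,d)$ and, by the assumption on the metric, sends $d$-convergence to strong resolvent convergence of the $n$-th entry; and $\mu_n$ is a (possibly $T$-dependent) scalar multiple of the one-dimensional spectral measure of $T_n$ at $\psi$ restricted to $F_n$, a rescaling that does not affect dimensions. Hence this set is the preimage, under a continuous map, of the set of operators whose restricted spectral measure is zero upper-Hausdorff dimensional, and that set is $G_\delta$ by the argument proving Theorem~\ref{P2} (equivalently, it is a countable intersection over rational $\gamma\in(0,1]$ of the $G_\delta$ sets furnished there); preimages of $G_\delta$ sets under continuous maps are again $G_\delta$. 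The sets $\{T\mid\dim_{\mathrm P}^-(\mu^n)=n\}$ I would treat the same way, now with the finite commuting subfamily $(T_1,\dots,T_n)$, its joint spectral measure on $\mathbb R^n$ restricted to $\prod_{k\le n}F_k$, and the lower-packing counterpart of Theorem~\ref{P2} mentioned in the Introduction; since $\dim_{\mathrm P}^-(\mu^n)\le n$ always, one has $\{T\mid\dim_{\mathrm P}^-(\mu^n)=n\}=\bigcap_{\beta\in\mathbb Q\cap(0,n)}\{T\mid\dim_{\mathrm P}^-(\mu^n)>\beta\}$, each member being $G_\delta$ by the same estimates read on $\mathbb R^n$ with the product metric.

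The step I expect to be the genuine obstacle is this last one. The estimates behind Theorem~\ref{P2} are set up for a single operator and a fixed vector, so I would need to check that they survive the passage to the marginals $\mu_n$ and, above all, to the finite products $\mu^n$ of joint spectral measures: that the relevant full-measure supports, the approximating $\gamma$-dimensional Hausdorff and packing conditions, and the local scaling estimates transfer correctly to $\mathbb R^n$ with the product metric, and---in the case $N=\infty$---that intersecting over all $n\in J_N$ and passing to the product topology on $\mathbb R^{\mathbb N}$ does not spoil the $G_\delta$ property. I expect this to be bookkeeping rather than a new idea; once it is in place, the Baire argument of the first paragraph closes the proof.
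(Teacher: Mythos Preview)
Your overall strategy matches the paper's: the paper does not give a detailed proof but simply says that Theorem~\ref{thmFDW1} follows by adapting the $G_\delta$ machinery of Section~\ref{sectHW} (Lemmas~\ref{asympt1}, \ref{LIMP} and Theorem~\ref{P2}) to functions on~$\mathbb{R}^n$ for each $n\in J_N$, and then running the Baire argument coordinate by coordinate---exactly what you outline.

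There is one inaccuracy in your Hausdorff shortcut. You write that $\mu_n$ is a scalar multiple of the spectral measure of~$T_n$ at~$\psi$ restricted to~$F_n$, and hence that $\{T\mid\dim_{\mathrm H}^+(\mu_n)=0\}$ is the preimage under the projection $T\mapsto T_n$ of a $G_\delta$ set. That is not quite right: the $n$-th marginal of $\mu_{\psi;F}^T$ is
\[
A\longmapsto\Big\langle\psi,\,E_n(A\cap F_n)\prod_{j\neq n}E_j(F_j)\,\psi\Big\rangle
=\mu_{\phi_T}^{T_n}(A\cap F_n),\qquad \phi_T:=\prod_{j\neq n}E_j(F_j)\,\psi,
\]
so the relevant vector is the $T$-dependent $\phi_T$, not $\psi$, and the set you want is \emph{not} a preimage under $T\mapsto T_n$ alone. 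The fix is the one you already anticipate in your last paragraph (and the one the paper points to): do not project to a single coordinate, but run the proof of Theorem~\ref{P2} directly for the joint measure on~$\mathbb{R}^n$ (or, equivalently, note that what the proof of Theorem~\ref{P2} really uses is only the continuity of the map $T\mapsto\mu$ into $\mathcal{M}_+$, which holds for $T\mapsto(\mu_{\psi;F}^T)_n$ by the joint functional calculus after the same closed-to-open reduction). With that correction your argument is complete and coincides with the paper's intended route.
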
 

One can prove Theorem~\ref{thmFDW1} using adapted versions of the results stated in Section~\ref{sectHW} for functions defined in $\mathbb{R}^n$, with $n\in J_N$.  

The result stated in Theorem~\ref{thmFDW1} is particularly true for the set of normal operators acting in $\mathcal{H}$, which we denote by $Y$; recall that a normal operator~$A$ can be written in terms of a pair~$T_1,T_2$ of commuting self-adjoint operators: $A=f(T_1,T_2)$, where~$f:\mathbb{R}^{2}\rightarrow\mathbb{C}$,~$f(x_1,x_2)=x_1+ix_2$. This also leads to a version of Simon's Wonderland Theorem~\cite{SimonWT} to normal operators.

\begin{theorem} Let $(Y,d)$ be as above, and suppose that each of the sets 
\begin{itemize}
\item $\{A\in Y\mid A$ has purely absolutely continuous spectrum$\}$,
\item $\{A\in Y\mid A$ has pure point spectrum$\}$
\end{itemize}
\noindent is dense in~$Y$. 
Then, the set $\{A\in Y\mid A$ has purely singular continuous spectrum$\}$ is generic in~$Y$.
\end{theorem}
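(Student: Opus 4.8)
The plan is to reduce this statement to the already-established Theorem~\ref{thmFDW1} by exploiting the correspondence between normal operators and pairs of commuting self-adjoint operators. Given a normal operator $A\in Y$, write $A = T_1 + iT_2$ with $T_1 = \operatorname{Re}A$, $T_2 = \operatorname{Im}A$ a commuting pair of self-adjoint operators, so that $Y$ is identified with the subspace of $X$ (in the $N=2$ case) consisting of commuting pairs, and the joint resolution of identity $E(\cdot) = E_1(\cdot)E_2(\cdot)$ on $\mathcal{B}(\mathbb{R}^2)$ is precisely the spectral projection-valued measure of $A$ viewed on $\mathbb{C}\cong\mathbb{R}^2$. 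First I would check that this identification is an isometry (or at least a homeomorphism) of $(Y,d)$ onto a closed subset of $(X,d)$, so that completeness and the Baire category framework transfer; here $d$ restricted to $Y$ induces strong resolvent convergence of both $T_1$ and $T_2$, equivalently strong resolvent convergence of $A$ in the normal-operator sense.

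The second step is the dictionary between spectral-type statements for $A$ and dimension statements for $\mu_\psi^T$. For a normal operator, ``purely absolutely continuous spectrum'' means every spectral measure $\mu_\psi^A$ (a measure on $\mathbb{C}\cong\mathbb{R}^2$) is absolutely continuous with respect to planar Lebesgue measure, and I claim this forces $\dim_{\mathrm{H}}^+(\mu_\psi^T)$ to be maximal in the sense of Definition~\ref{DIM} — in fact a.c.\ measures on $\mathbb{R}^2$ have full Hausdorff and packing dimension $2$, so in particular the marginals have dimension... — wait, the correct direction is the other one. The point is: if the a.c.\ operators are dense in $Y$ then in particular the set $C^{\psi;F}_{\mathrm{max}}$ (for packing) is dense, because an a.c.\ spectral measure on $\mathbb{R}^2$ has $\dim_{\mathrm{P}}^-$ maximal; and if the pure point operators are dense then $C^{\psi;F}_{\mathrm{min}}$ is dense, because a pure point measure is a countable sum of atoms, hence zero-dimensional, so its marginals are zero-dimensional and $\dim_{\mathrm{H}}^+$ is minimal. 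Thus both density hypotheses of Theorem~\ref{thmFDW1} (with $F = \mathbb{R}\times\mathbb{R}$, or $F$ a large closed rectangle containing the relevant spectra) are verified from the two density hypotheses of the present theorem. Applying Theorem~\ref{thmFDW1} yields a generic set of pairs $(T_1,T_2)$ with $\dim_{\mathrm{H}}^+(\mu_{\psi}^T)$ minimal and $\dim_{\mathrm{P}}^-(\mu_{\psi}^T)$ maximal.

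The third step is to translate ``$\dim_{\mathrm{H}}^+$ minimal and $\dim_{\mathrm{P}}^-$ maximal'' back into ``purely singular continuous spectrum'' for $A$. Minimality of $\dim_{\mathrm{H}}^+(\mu_\psi^A)$ (all one-dimensional marginals have zero upper-Hausdorff dimension) forces $\mu_\psi^A$ to give zero mass to any set of positive planar Lebesgue measure — since $\mu_\psi^A$ is supported on a set whose projections are Hausdorff-null, the set itself is planar-Lebesgue-null — hence $\mu_\psi^A$ has no a.c.\ part. Maximality of $\dim_{\mathrm{P}}^-(\mu_\psi^A)$ (the product marginal has lower-packing dimension $2$) is incompatible with the presence of any atom, since an atomic component would drag the lower-packing dimension of $\mu^2 = \mu_1\mu_2$ below $2$; hence $\mu_\psi^A$ has no pure point part. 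So every spectral measure is purely singular continuous, i.e.\ $A$ has purely singular continuous spectrum. Running this for a countable dense family of $\psi$ (or using a cyclic-vector argument as in Remark~\ref{RIMP}) and intersecting the corresponding generic sets gives the result.

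The main obstacle I expect is step three, specifically the claim that minimality of the upper-Hausdorff dimension of the \emph{marginals} rules out an absolutely continuous component of the \emph{joint} measure on $\mathbb{R}^2$: one must argue carefully that a measure on $\mathbb{R}^2$ supported on a Borel set $S$ all of whose coordinate projections are Hausdorff-null is itself null for planar Lebesgue measure, and dually that a single atom of the joint measure forces a positive-mass atom in at least one marginal, which would make that marginal's lower-packing dimension zero and hence violate maximality of $\dim_{\mathrm{P}}^-(\mu^2)$. These are the points where Definition~\ref{DIM}'s product-measure formulation does real work, and where one should quote (or adapt) the standard relations between Hausdorff/packing dimensions of a product measure and of its factors; everything else is bookkeeping with the $X\leftrightarrow Y$ identification and an invocation of Theorem~\ref{thmFDW1}.
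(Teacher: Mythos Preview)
Your approach is correct and is precisely the route the paper indicates: the theorem is stated without proof immediately after the remark that Theorem~\ref{thmFDW1} applies to normal operators via the identification $A=T_1+iT_2$, and the intended argument is exactly your reduction (pure point $\Rightarrow$ minimal $\dim_{\mathrm H}^+$; absolutely continuous $\Rightarrow$ maximal $\dim_{\mathrm P}^-$; hence by Theorem~\ref{thmFDW1} the min/max set is generic, and this set sits inside the purely singular continuous operators). Your worries about step three are not genuine obstacles: if each marginal $\mu_\psi^{T_j}$ is supported on a set $S_j$ with $\dim_{\mathrm H}(S_j)=0$ then the joint measure is supported on $S_1\times S_2$, which has zero planar Lebesgue measure by Fubini (so no a.c.\ part), and any atom $(a,b)$ of the joint measure gives $\mu_\psi^{T_1}(\{a\})\ge\mu_\psi^T(\{(a,b)\})>0$, forcing $\dim_{\mathrm P}^-(\mu_\psi^{T_1})=0$ and contradicting maximality (so no pure point part); the passage to all $\psi$ via a countable dense family works because $\mathcal H_{\mathrm{sc}}(A)$ is a closed subspace.
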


In what follows, we use the remark above in order to extend the result stated in Theorem~3.1 in~\cite{SimonWT} to normal operators. Let $a:=(a_1,a_2)$ be such that~$a_1,a_2>0$, and set $Y^a=\{A\in Y\mid\Vert T_1\Vert\le a_1$, $\Vert T_2\Vert\le a_2\}$. 

\begin{theorem}\label{normal}
Let~$\psi\in\mathcal{H}$ with~$\Vert\psi\Vert=1$ and set $R:=[-a_1,a_1]\times[-a_2,a_2]$. Then, the set~$\{A\in Y^b\mid \supp(\mu_\psi^A)=R$,~$\dim_{\mathrm{H}}^+(\mu_{\psi}^A)$ is minimal,~$\dim_{\mathrm{P}}^-(\mu_{\psi}^A)$ is maximal$\}$ is generic in~$Y^a$.
\end{theorem}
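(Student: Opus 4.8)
\textbf{Proof proposal for Theorem~\ref{normal}.}

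The plan is to reduce this to an application of Theorem~\ref{thmFDW1} (the commuting-pair case, $N=2$) together with a density argument borrowed, via the normal-operator dictionary $A=T_1+iT_2$, from the construction in the proof of Theorem~3.1 in~\cite{SimonWT}. First I would fix $\psi$ with $\|\psi\|=1$ and the rectangle $R=[-a_1,a_1]\times[-a_2,a_2]$, and take $F_1=[-a_1,a_1]$, $F_2=[-a_2,a_2]$, so that $F=F_1\times F_2=R$ and every operator in $Y^a$ automatically has $\supp(\mu_\psi^A)\subset R$; thus the constraint $\mu_{\psi;F}^A=\mu_\psi^A$ is built in. Then the theorem follows once we verify the two density hypotheses of Theorem~\ref{thmFDW1} \emph{within} $Y^a$ \emph{and} simultaneously arrange, on a dense set, that $\supp(\mu_\psi^A)$ is all of $R$ (not a proper subset), because the intersection of the resulting generic set with the $G_\delta$ set $\{A\in Y^a\mid\supp(\mu_\psi^A)=R\}$ — which I must show is indeed $G_\delta$ and dense — is still generic.

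The three ingredients to produce are therefore: (i) a dense set of $A\in Y^a$ for which $\mu_\psi^A$ has minimal upper-Hausdorff dimension, i.e. each marginal $\mu_{\psi,j}^A$ (the spectral measure of $T_j$) is zero upper-Hausdorff dimensional; (ii) a dense set of $A\in Y^a$ for which $\mu_\psi^A$ has maximal lower-packing dimension; and (iii) a dense set of $A\in Y^a$ with $\supp(\mu_\psi^A)=R$. For (iii) and the genericity of $\{\supp(\mu_\psi^A)=R\}$, I would mimic Simon's argument for Theorem~3.1 in~\cite{SimonWT}: the set of normal operators of the form $A=T_1+iT_2$ with $T_1,T_2$ commuting, each having finite or pure point spectrum densely filling the respective intervals, is dense, and $\{A\in Y^a\mid R\subset\overline{\sigma(A)}\}=\bigcap_{k}\{A\mid\sigma(A)\cap B_{1/k}(z)\neq\emptyset\ \forall z\in R_k\}$ over a countable dense set $R_k\subset R$ is $G_\delta$ by upper semicontinuity arguments under strong resolvent convergence; combined with $\supp(\mu_\psi^A)\subset R$ always, one gets $\supp(\mu_\psi^A)=R$ on a dense $G_\delta$. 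For (i), I would take direct sums / truncations producing $T_1,T_2$ with pure point spectra supported on finite sets (hence trivially zero-dimensional marginals), approximating a given $A\in Y^a$; for (ii), I would use the rank-one / direct-sum perturbation technique from~\cite{SimonWT} and the earlier papers~\cite{SCP} that yields spectral measures with full lower-packing dimension, applied separately to each commuting coordinate, noting that packing-dimensional maximality of the product reduces (by Definition~\ref{DIM}) to a statement about the partial products $\mu^n=\prod_{k\le n}\mu_k$, which for $N=2$ is just the two conditions $\dim_{\mathrm{P}}^-(\mu_1)=1$ and $\dim_{\mathrm{P}}^-(\mu^2)=2$.

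I expect the main obstacle to be ingredient (ii) together with the bookkeeping of the product-measure dimension conventions: one must check that the lower-packing maximality condition for the \emph{joint} measure on $\mathbb{R}^2$ is actually achieved by the coordinatewise construction, i.e. that $\dim_{\mathrm{P}}^-\!\big(\mu_1\times\mu_2\big)$ attains the value $2$ when both factors are engineered to have lower-packing dimension $1$ — this needs the product formula for lower-packing dimensions (or, more safely, a direct construction on $\mathbb{R}^2$ in the spirit of the one-dimensional argument of~\cite{SCP,SimonWT}, as hinted in the remark preceding Theorem~\ref{thmFDW1} about "adapted versions of the results stated in Section~\ref{sectHW} for functions defined in $\mathbb{R}^n$"). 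A secondary, more routine point is to confirm that everything can be done \emph{inside} the bounded set $Y^a$, i.e. that the approximating operators can be chosen with $\|T_1\|\le a_1$, $\|T_2\|\le a_2$; this is straightforward since the perturbations involved are small in the relevant metric and can be arranged not to increase the operator norms beyond the prescribed bounds. Once (i), (ii), (iii) and the $G_\delta$ claim for $\{\supp(\mu_\psi^A)=R\}$ are in hand, Theorem~\ref{thmFDW1} delivers genericity of $\{\dim_{\mathrm{H}}^+(\mu_\psi^A)\text{ minimal},\ \dim_{\mathrm{P}}^-(\mu_\psi^A)\text{ maximal}\}$, and intersecting with the dense $G_\delta$ support condition completes the proof. \hfill$\Box$
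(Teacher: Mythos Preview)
The paper does not actually contain a proof of Theorem~\ref{normal}. It states the result immediately after the sentence ``In what follows, we use the remark above in order to extend the result stated in Theorem~3.1 in~\cite{SimonWT} to normal operators,'' and then Section~\ref{sectHW1} proves only Theorems~\ref{thmFDW} and~\ref{LPO}. (The \texttt{proof2} environment for Theorem~\ref{normal} is defined in the preamble but never invoked.) So there is nothing to compare your proposal against beyond that one-line hint, and your outline---use the dictionary $A=T_1+iT_2$, apply Theorem~\ref{thmFDW1} with $N=2$ and $F=R$, and supply the density hypotheses by transporting the construction from Simon's Theorem~3.1 to the two commuting coordinates---is exactly the route the paper signals.

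Your own diagnosis of the soft spot is accurate. Ingredients~(i) and~(iii) are routine adaptations of Simon's arguments, and keeping the approximants inside $Y^a$ is indeed harmless. The genuine work is ingredient~(ii): by Definition~\ref{DIM}, maximality for $N=2$ requires $\dim_{\mathrm P}^-(\mu_1\times\mu_2)=2$, and this does \emph{not} follow formally from $\dim_{\mathrm P}^-(\mu_j)=1$ for $j=1,2$ via a general product inequality for lower packing dimension. The paper's only gesture toward this is the remark that Theorem~\ref{thmFDW1} is proved ``using adapted versions of the results stated in Section~\ref{sectHW} for functions defined in $\mathbb{R}^n$,'' which is precisely the direct $\mathbb{R}^2$ construction you propose as the safer alternative. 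In short, your proposal is already at least as detailed as what the paper supplies for this theorem, and you have correctly located where the missing argument lives.
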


 \subsection{Organization}
 In Section~\ref{HPM}  we recall important decompositions of Borel measures on~$\mathbb{R}$ with respect to Hausdorff and packing dimensions, along with the corresponding spectral decompositions of self-adjoint operators. 
 Section~\ref{sectHW} is dedicated to the construction of suitable $G_\delta$ sets. In Section~\ref{sectHW1} we present the proofs of Theorems~\ref{thmFDW} and~\ref{LPO}.

 Now some words about notation. $\mathcal H$ will always denote a complex separable Hilbert space. 
 $\mathcal{B}(\mathbb{R})$  denotes the collection of Borel sets in~$\mathbb{R}$; $\mu$ will always indicate a finite nonnegative Borel measure on~$\mathbb{R}$, and its restriction to the Borel set~$A$ will be indicated by $\mu_{;A}(\cdot):=\mu(A\cap\cdot)$. The adjective \textit{absolutely continuous} without specification means that~$\mu$ is absolutely continuous with respect to Lebesgue measure on~$\mathbb{R}$.  A nonnegative Borel measure~$\nu$ on~$\mathbb{R}$ is \textit{supported} on a Borel set~$S$ if $\nu(\mathbb{R}^n\setminus S)=0$. 
Finally, it will also be convenient to use the symbol~$\mathrm K$ to refer to either~$\mathrm H$ or~$\mathrm P$, which stands for Hausdorff and packing properties, respectively.

\section{Preliminaries} \label{HPM}

\subsection{Hausdorff and packing measures}

Let us recall the definitions of Hausdorff and packing measures on~$\mathbb{R}$.

\DEFI Let $A\subset\mathbb{R}$. By a $\delta$-covering of~$A$ we mean any countable collection~$\{E_k\}$ of subsets of~$\mathbb{R}$ such that~$A\subset\cup_{k\ge 1}E_k$ and $\diam(E_k):=\sup_{x,y\in E_k}\vert x-y\vert\le\delta$. For each $\alpha\in[0,1]$, the~$\alpha$-\textit{dimensional (exterior)  Hausdorff measure} of~$A$ is defined as
\begin{equation*} h^{\alpha}(S) = \lim_{\delta \downarrow 0}\inf \Big\{ \sum_{ k  =1}^{\infty }\diam(E_k)^{\alpha}\mid \{E_k\}\; \mathrm{is\; a\;} \delta\text{-covering\; of\;} S\Big\} \;.  \label{Qhaus} \end{equation*}
\label{DHaus}
\DEFF

The \textit{Hausdorff dimension} of the set~$S$, here denoted by~$\dim_{\mathrm H}(S)$, is defined as the infimum of all~$\alpha$ such that  $h^\alpha(S)=0$; note that $h^\alpha(S)=\infty$ if $\alpha<\dim_{\mathrm H}(S)$.

 A $\delta$-packing of an arbitrary set $S\subset\mathbb{R}$ is a countable disjoint collection $(\bar B(x_k;r_k))_{k\in\mathbb{N}}$ of closed balls centered at~$x_k\in S$ and radii $r_k\le\delta/2$, so with diameters at most of~$\delta$. Define $P^\alpha_\delta(S)$, $\alpha\in[0,1]$, as
\begin{equation*}
 P^\alpha_\delta(S)=\sup\Big\{\sum_{k=1}^\infty(2r_k)^\alpha\mid (\bar B(x_k;r_k))_k \;\mathrm{is\; a} \;\delta\text{-packing\;of}\;A\Big\}\;,
\end{equation*}
that is,  the supremum is taken over all~$\delta$-packings of~$S$. Then, take the decreasing limit
\begin{equation*}
 P_0^\alpha(S)=\lim_{\delta \downarrow 0}P^\alpha_\delta(S)
\end{equation*} which is  a pre-measure.

\DEFI
The \textit{$\alpha$-packing (exterior) measure} $P^\alpha(S)$ of~$S$ is given by
\begin{equation*}
 P^\alpha(S):=\inf\Big\{\sum_{k=1}^\infty P_0^\alpha(E_k)\mid S\subset\bigcup_{k=1}^\infty E_k \Big\} \;. 
\end{equation*}\label{DPack}
\DEFF
 
The  \textit{packing dimension} of the set~$S$, here denoted by~$\dim_{\mathrm P}(S)$, is defined (in analogy to~$\dim_{\mathrm H}(S)$) as the infimum of all~$\alpha$ such that  $P^\alpha(S)=0$, which coincides with the supremum of all~$\alpha$ so that~$P^\alpha(S)=\infty$. 

It is known~\cite{Mat} that  $\dim_{\mathrm H}(S)\le\dim_{\mathrm P}(S)$, and this 
inequality is in general strict. It is also important to mention that $P^\alpha$ and $h^\alpha$ are Borel (regular) measures; furthermore, $P^0\equiv h^0$, $P^1\equiv h^1$, and they are equivalent, respectively, to the counting measure (which assigns to each set~$S$ the number of elements it has) and the Lebesgue measure. 

\DEFI\label{D4.5L}
Let $\alpha\in[0,1]$. A finite nonnegative Borel measure~$\mu$ on~$\mathbb{R}$ is called:
\begin{enumerate}
\item  \textit{$\alpha\text{-}\mathrm{K}$ continuous}, denoted $\alpha\mathrm{Kc}$, if~$\mu(S)=0$ for every Borel set~$S$ such that 
  $\mathrm{K}^\alpha(S)=0$. 
\item  \textit{$\alpha\text{-}\mathrm{K}$ singular}, denoted $\alpha\mathrm{Ks}$, if it is supported on some Borel set~$S$  with $\mathrm{K}^{\alpha}(S)=0$. 
\item 0-$\mathrm{K}$ {\em dimensional}, denoted $0\mathrm{Kd}$, if it is supported on a Borel set~$S$ with $\dim_{\mathrm K}(S)=0$.
\item 1-$\mathrm{K}$ {\em dimensional}, denoted $1\mathrm{Kd}$, if $\mu(S)=0$ for any Borel set~$S$ with $\dim_{\mathrm K}(S)<1$. 
\end{enumerate}
\DEFF

\OBSI\label{Obs}
\begin{enumerate}
\item $\mu$ is~$0\mathrm{Kd}$ if, and only if, it is~$\alpha$Ks for each~$\alpha\in(0,1]$. Equivalently,~$\mu$ is~$1\mathrm{Kd}$ if, and only if, it is~$\alpha$Kc for each $\alpha\in[0,1)$.
\item It follows from Definition~\ref{D4.5L} that~$\mu$ is~$0\mathrm{Kd}$ if it is pure point, whereas~$\mu$ is~$1\mathrm{Kd}$ if it is absolutely continuous.
\end{enumerate}
\OBSF

\DEFI \label{LFD1}
Let~$\mu$ be a finite nonnegative Borel measure on~$\mathbb{R}$ and $x\in\mathbb{R}$. Set $B(x;\varepsilon)=\{y\in\mathbb{R}\mid \vert x-y\vert<\varepsilon\}$, i.e., the open ball of radius~$\varepsilon>0$ centered at $x$, and
\[
D_{\mu}^{\mathrm{H},\alpha}(x):=\limsup_{\varepsilon\downarrow 0}\frac{\mu(B(x;\varepsilon))}{(2\varepsilon)^\alpha}\;,\qquad D_{\mu}^{\mathrm{P},\alpha}(x):=\liminf_{\varepsilon\downarrow 0}\frac{\mu(B(x;\varepsilon))}{(2\varepsilon)^\alpha}\;.
 \] 
\DEFF

The following density results~\cite{Guar,Rogers} relate the continuity of~$\mu$, with respect to Hausdorff (packing) dimension, to its local scaling behavior as probed by~$D_{\mu}^{\mathrm{K},\alpha}$. 

\begin{theorem}\label{Corl}
Let~$\mu$ be as above and let $\alpha\in[0,1]$. Let 
\begin{align}
\nonumber&K_{\mathrm{\alpha Kc}}:=\{x\in\mathbb{R}\mid D^{\mathrm{K},\alpha}_\mu(x)<\infty\},\qquad
K_{\mathrm{\alpha Ks}}:=\{x\in\mathbb{R}\mid D^{\mathrm{K},\alpha}_\mu(x)=\infty\}.
\end{align}
Then, these are Borel sets, $\mu_{\mathrm{\alpha Kc}}(\cdot):= \mu(K_{\mathrm{\alpha Kc}}\cap\cdot)$ is $\alpha\mathrm{Kc}$,~$\mu_{\mathrm{\alpha Ks}}(\cdot):= \mu(K_{\mathrm{\alpha Ks}}\cap\cdot)$ is $\alpha\mathrm{Ks}$,~$\mu_{0\mathrm{Kd}}(\cdot):= \mu((\bigcap_{k\ge 1}K_{\mathrm{(1/k)Ks}})\cap\cdot)$ is $0\mathrm{Kd}$, and $\mu_{1\mathrm{Kd}}(\cdot):= \mu((\bigcap_{k\ge 1}K_{\mathrm{(1-1/k)Kc}})\cap\cdot)$ is $1\mathrm{Kd}$. 
\end{theorem}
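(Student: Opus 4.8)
\textbf{Proof plan for Theorem~\ref{Corl}.}
The strategy is the classical density-comparison argument (of Rogers--Taylor and Guarneri--Last type), carried out separately for the Hausdorff case ($\mathrm K=\mathrm H$, $D^{\mathrm H,\alpha}_\mu$ a $\limsup$) and the packing case ($\mathrm K=\mathrm P$, $D^{\mathrm P,\alpha}_\mu$ a $\liminf$); since the two cases are formally identical once one has the appropriate covering/packing comparison lemma, I would present the Hausdorff case in detail and indicate the trivial modifications for packing. First I would record that $x\mapsto\mu(B(x;\varepsilon))$ is, for fixed $\varepsilon$, a lower/upper semicontinuous function of $x$, hence $D^{\mathrm K,\alpha}_\mu$ is a Borel function (taking values in $[0,\infty]$) as a $\limsup$/$\liminf$ along a countable sequence $\varepsilon=1/m$ of such functions; consequently $K_{\alpha\mathrm{Kc}}$, $K_{\alpha\mathrm{Ks}}$ are Borel, and the countable intersections defining the $0\mathrm{Kd}$ and $1\mathrm{Kd}$ parts are Borel as well. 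This makes all the restricted measures $\mu_{\alpha\mathrm{Kc}}$ etc.\ well defined.

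For the core estimates I would invoke the two standard comparison principles. (i) If $A$ is a Borel set and $D^{\mathrm H,\alpha}_\mu(x)\le c<\infty$ for all $x\in A$, then $\mu(A\cap\cdot)\le 2^{\alpha+1}c\,h^\alpha(\cdot\,\cap A)$ as measures; in particular $h^\alpha(S)=0\Rightarrow\mu(A\cap S)=0$, which is exactly the statement that $\mu_{\alpha\mathrm{Hc}}$ is $\alpha\mathrm{Hc}$. (ii) Conversely, if $D^{\mathrm H,\alpha}_\mu(x)=\infty$ for all $x$ in a Borel set $A$ with $\mu(A)>0$, then for every $\varepsilon>0$ one can extract from a fine Vitali-type cover of $A$ by balls on which $\mu(B)\ge t(2\,\mathrm{rad})^\alpha$ (with $t$ large) a countable subcover witnessing $h^\alpha(A)\le \mu(A)/t$, so $h^\alpha(A)=0$; this gives that $\mu_{\alpha\mathrm{Hs}}$ is supported on the set $K_{\alpha\mathrm{Hs}}$, which has $h^\alpha(K_{\alpha\mathrm{Hs}})=0$ (split $K_{\alpha\mathrm{Hs}}$ into the $\mu$-null piece plus the positive-measure piece and apply the above). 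For the packing analogues one replaces $h^\alpha$ by the packing pre-measure $P^\alpha_0$ and covers by packings, using that $D^{\mathrm P,\alpha}_\mu(x)<\infty$ controls $\mu$ from above in terms of $P^\alpha$ on the continuity side, and $D^{\mathrm P,\alpha}_\mu(x)=\infty$ forces $P^\alpha$ to vanish on the singular side; the passage from the pre-measure $P^\alpha_0$ to the measure $P^\alpha$ is harmless because one only ever concludes vanishing.

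Once (i) and (ii) are in place, the $0\mathrm{Kd}$ and $1\mathrm{Kd}$ statements follow by a monotonicity-in-$\alpha$ bookkeeping. For $0\mathrm{Kd}$: on $\bigcap_{k\ge1}K_{(1/k)\mathrm{Ks}}$ the measure $\mu_{0\mathrm{Kd}}$ is, for every $k$, of the form $\mu$ restricted to (a subset of) $K_{(1/k)\mathrm{Ks}}$, hence $(1/k)\mathrm{Ks}$ by part~(ii), i.e.\ supported on a set of $\mathrm K^{1/k}$-measure zero; a set that is $\alpha\mathrm{Ks}$ for all $\alpha>0$ is supported on a set of Hausdorff/packing dimension $0$, by Remark~\ref{Obs}(1). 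For $1\mathrm{Kd}$: on $\bigcap_{k\ge1}K_{(1-1/k)\mathrm{Kc}}$ the measure is $(1-1/k)\mathrm{Kc}$ for every $k$ by part~(i), hence annihilates every Borel set with $\mathrm K$-dimension $<1$ (such a set is $\mathrm K^{\alpha}$-null for some $\alpha<1$, pick $k$ with $1-1/k>\alpha$), again via Remark~\ref{Obs}(1). The main obstacle, and the only place requiring real care, is the Vitali/packing covering step in~(ii): one must produce a cover (resp.\ packing) of the relevant set by balls on which the density quotient is large, \emph{uniformly} enough to get a quantitative bound $h^\alpha(A)\le\mu(A)/t$ with $t\to\infty$; this is where the standard $5r$-covering lemma (for the Hausdorff side) or a disjointification argument (for the packing side) enters, together with the inner regularity of $\mu$ to reduce to compact $A$. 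Everything else is routine measure-theoretic bookkeeping.
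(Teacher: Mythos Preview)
Your plan is correct and is precisely the classical Rogers--Taylor/Last density-comparison argument; the paper does not prove Theorem~\ref{Corl} in the text at all but simply refers the reader to Section~4 of~\cite{Last} for the Hausdorff case and asserts that the packing case is analogous, so you are in effect reconstructing the cited proof rather than deviating from it. One small simplification: in step~(ii) you do not need to split $K_{\alpha\mathrm{Hs}}$ into a $\mu$-null piece and a positive-measure piece---since $\mu$ is finite, the bound $h^\alpha(K_{\alpha\mathrm{Hs}})\le c\,\mu(\mathbb R)/t$ for every $t>0$ already forces $h^\alpha(K_{\alpha\mathrm{Hs}})=0$ directly.
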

\begin{proof}
See Section 4 in~\cite{Last} for the Hausdorff case; the packing case follows analogously.
\end{proof}

By following~\cite{Guar,You}, we recall the  upper and lower dimensions of a finite Borel measure~$\mu$.

\DEFI\label{defDimMu}
Let~$\mu$ be as above, and let $I \subset\mathbb{R}$ be a Borel set. The $\mathrm K$ {\it upper dimension} of~$\mu$ restricted to~$I$, denoted by $\dim_{\mathrm K}^+(\mu_{;I})$, is defined as 
\[
\dim_{\mathrm K}^+(\mu_{;I}):=\inf \{ \dim_{\mathrm K}(S)\mid \mu(I\setminus S)=0, \,S\; \mathrm{a\; Borel\; subset\; of}\; I \},
\]
and the $\mathrm K$ {\it lower dimension} of~$\mu$ restricted to~$I$, denoted by $\dim_{\mathrm K}^-(\mu_{;I})$, as
\[
\dim_{\mathrm K}^-(\mu_{;I}):=\sup\{\alpha\mid\mu(S)=0\;\;\mathrm{if}\;\dim_{\mathrm K}(S)<\alpha,\;S\; \mathrm{a\; Borel\; subset\; of}\; I\}.
\]
When $I=\mathbb{R}$, we simply denote $\dim_{\mathrm K}^{\pm}(\mu_{;I})$ by $\dim_{\mathrm K}^{\pm}(\mu)$. 
\DEFF

\begin{proposition}
\label{CPRo1}
Let $\mu$ be as above, let~$I$ be a Borel subset of $\mathbb{R}$, and let~$\alpha\in(0,1)$. Then,
\begin{enumerate}
\item $\alpha\le\dim_{\mathrm K}^-(\mu_{;I})$ if, and only if, for each~$\var\in(0,\alpha]$,~$\mu_{;I}$ is~$\mathrm{(\alpha-\var)Kc}$;
\item $\dim_{\mathrm K}^+(\mu_{;I})\le\alpha$ if, and only if, for each~$\var\in(0,1-\alpha]$,~$\mu_{;I}$ is $(\alpha+\var)\mathrm{Ks}$.
\end{enumerate}
\end{proposition}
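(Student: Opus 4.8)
The plan is to prove Proposition~\ref{CPRo1} by reducing each equivalence to the characterizations of $\alpha\mathrm{Kc}$ and $\alpha\mathrm{Ks}$ measures recalled in Definition~\ref{D4.5L} and Remark~\ref{Obs}, together with the definitions of the lower and upper dimensions in Definition~\ref{defDimMu}. The two items are dual (one concerns ``continuity from below'' and the other ``singularity from above''), so I would prove item~1 in detail and indicate that item~2 follows by an entirely analogous argument, swapping the roles of $\dim_{\mathrm{K}}^-$ with $\dim_{\mathrm{K}}^+$, of $\sup$ with $\inf$, and of $\alpha\mathrm{Kc}$ with $\alpha\mathrm{Ks}$.

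For item~1, first I would unwind $\dim_{\mathrm{K}}^-(\mu_{;I})$: by Definition~\ref{defDimMu}, $\alpha\le\dim_{\mathrm{K}}^-(\mu_{;I})$ means that for every $\beta<\alpha$ and every Borel $S\subset I$ with $\dim_{\mathrm{K}}(S)<\beta$ one has $\mu(S)=0$; equivalently (taking the supremum over such $\beta$), $\mu(S)=0$ whenever $\dim_{\mathrm{K}}(S)<\alpha$. So it suffices to show that this last condition is equivalent to: for each $\var\in(0,\alpha]$, $\mu_{;I}$ is $(\alpha-\var)\mathrm{Kc}$. For the forward direction, fix $\var\in(0,\alpha]$, set $\beta=\alpha-\var<\alpha$, and let $S\subset\mathbb{R}$ be Borel with $\mathrm{K}^{\beta}(S)=0$; then $\dim_{\mathrm{K}}(S)\le\beta<\alpha$, and replacing $S$ by $S\cap I$ (which has no larger dimension and no larger $\mathrm{K}^\beta$-measure) we get $\mu_{;I}(S)=\mu(I\cap S)=0$, which is exactly $(\alpha-\var)\mathrm{Kc}$. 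For the converse, suppose $\mu_{;I}$ is $(\alpha-\var)\mathrm{Kc}$ for every $\var\in(0,\alpha]$, and let $S\subset I$ be Borel with $\dim_{\mathrm{K}}(S)<\alpha$. Choose $\var\in(0,\alpha]$ with $\dim_{\mathrm{K}}(S)<\alpha-\var$; by the very definition of the dimension as an infimum of exponents with vanishing $\mathrm{K}^{\cdot}$-measure, $\mathrm{K}^{\alpha-\var}(S)=0$, hence $\mu_{;I}(S)=0$, and since $S\subset I$ this gives $\mu(S)=0$. Taking the supremum over admissible $\alpha$ yields $\alpha\le\dim_{\mathrm{K}}^-(\mu_{;I})$.

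The only genuinely delicate point — and the step I would flag as the main obstacle — is the careful handling of the boundary/endpoint cases and the strict versus non-strict inequalities in ``$\dim_{\mathrm{K}}(S)<\alpha$'' versus ``$\mathrm{K}^{\alpha-\var}(S)=0$'': one must use that $\mathrm{K}^{\beta}(S)=\infty$ for $\beta<\dim_{\mathrm{K}}(S)$ and $\mathrm{K}^{\beta}(S)=0$ for $\beta>\dim_{\mathrm{K}}(S)$, while the behavior exactly at $\beta=\dim_{\mathrm{K}}(S)$ is ambiguous; this is precisely why the statement quantifies over all $\var\in(0,\alpha]$ rather than fixing a single one, and why one never needs the critical exponent itself. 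A secondary routine check is that intersecting test sets with $I$ does not increase Hausdorff/packing dimension (monotonicity of $\mathrm{K}^{\alpha}$), which lets us pass freely between Borel subsets of $I$ and arbitrary Borel subsets of $\mathbb{R}$; this is where the restriction notation $\mu_{;I}(\cdot)=\mu(I\cap\cdot)$ does its work.

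For item~2, by Definition~\ref{defDimMu}, $\dim_{\mathrm{K}}^+(\mu_{;I})\le\alpha$ means that for every $\beta>\alpha$ there is a Borel $S\subset I$ with $\mu(I\setminus S)=0$ and $\dim_{\mathrm{K}}(S)\le\beta$ — equivalently, for each $\var\in(0,1-\alpha]$ one can find such an $S$ with $\dim_{\mathrm{K}}(S)\le\alpha+\var$, which (choosing exponents slightly above the dimension) forces $\mathrm{K}^{\alpha+\var}(S)=0$, i.e.\ $\mu_{;I}$ is $(\alpha+\var)\mathrm{Ks}$. Conversely, if $\mu_{;I}$ is $(\alpha+\var)\mathrm{Ks}$ for each $\var\in(0,1-\alpha]$, then for each such $\var$ there is a Borel set $S_\var$ of full $\mu_{;I}$-measure with $\mathrm{K}^{\alpha+\var}(S_\var)=0$, hence $\dim_{\mathrm{K}}(S_\var)\le\alpha+\var$; replacing $S_\var$ by $S_\var\cap I$ keeps full $\mu_{;I}$-measure and does not increase the dimension, so $\dim_{\mathrm{K}}^+(\mu_{;I})\le\alpha+\var$ for every $\var\in(0,1-\alpha]$, and letting $\var\downarrow 0$ gives $\dim_{\mathrm{K}}^+(\mu_{;I})\le\alpha$. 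Writing it out, one again only uses the two-sided ``jump'' behavior of $\mathrm{K}^{\beta}(S)$ across $\beta=\dim_{\mathrm{K}}(S)$ together with monotonicity in the set argument, so no new difficulty arises beyond the one already identified.
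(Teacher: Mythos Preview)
Your argument is correct and is essentially the direct unwinding of Definitions~\ref{D4.5L} and~\ref{defDimMu} that one would expect; the only slightly loose step is in the forward implication of item~2, where from $\dim_{\mathrm{K}}^+(\mu_{;I})\le\alpha$ you should pick the support set $S$ with $\dim_{\mathrm K}(S)<\alpha+\var$ strictly (say via $\var/2$) before concluding $\mathrm{K}^{\alpha+\var}(S)=0$, rather than only $\dim_{\mathrm K}(S)\le\alpha+\var$ --- but you clearly anticipate this in your ``endpoint'' discussion, so no real gap remains.

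As for comparison with the paper: the paper does not actually prove Proposition~\ref{CPRo1} here but simply refers the reader to Section~1 of~\cite{SCC}, so your self-contained argument is more detailed than what the present text provides.
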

\begin{proof} See Section~1 in~\cite{SCC}.
\end{proof}

\section{$G_\delta$ sets}\label{sectHW}

Let $(X,d)$ be as in the Introduction, let $\emptyset\neq O\subset\mathbb{R}$ be an open set, 
and let
\[\mathcal{M}_+(O):=\Big\{\mu\in \mathcal{M}(O)\mid0\le\mu\le 1
\Big\},\]
that is, the set of positive measures on $O$ with total mass less than or equal to one. 
We endow such set with the weak topology, i.e., the topology of the weak convergence of measures ($(\mu_n)$ converges weakly to $\mu$ if for each $f\in C_b(O)$, $\int f(x)d\mu_n(x)\rightarrow \int f(x)d\mu(x)$; here, $C_b(O)$ denotes the set of bounded continuous functions defined on $O$). Recall that such topology is metrizable (since $O$ is a Polish space): 
take, for instance, the L\'evy-Prohorov metric, which will be denoted by $\rho$ (see Appendix 2 in~\cite{Daley} for details).

Let also, for each $T\in X$ and each $0\neq\psi\in\mathcal{H}$, $\zeta_\psi:X\rightarrow\mathcal{M}_+(O)$ be defined by the law $\zeta_\psi(T):=\mu_{\psi;O}^{T}$, where $\mu_{\psi;O}^{T}(\cdot):=\mu_{\psi}^{T}(O\cap\cdot)$. 
It follows from the functional calculus for self-adjoint operators that $\zeta_\psi$ is a continuous function: if $\lim_{m\to\infty}d(T_m,T)=0$, then $\lim_{m\to\infty}\rho\left(\mu_{\psi;O}^{T_m},\mu_{\psi;O}^{T}\right)=0$.

\begin{lemma}
  \label{asympt1}
  Let $\emptyset\neq O\subset\mathbb{R}$ be an open set and let, for each~$t>0$, $V_{t}(\cdot,\cdot):\mathcal{M}_+(O)\times O\rightarrow[0,1]$ be defined by the law $V_{t}(\mu,x):=\int f_{t,x}(y)d\mu(y)$, where  $f_{t,x}:O\rightarrow[0,1]$ is given by 
 $$
  f_{t,x}(y):= \left\{ \begin{array}{lcc}                  
            1 & ,if  & |x-y| \leq 1/t, \\
            \\ -t|x-y|+2&, if & 1/t\le |x-y|\le 2/t,\\
            \\ 0 &   ,if  & |x-y| \geq 2/t.
          \end{array}
\right.$$
Let also, for each $0\neq\psi\in\mathcal{H}$, $U_{t,\psi}(\cdot,\cdot):X\times O\rightarrow[0,1]$ be defined by the law 
\[
U_{t,\psi}(T,x):=(\psi,f_{t,x}(T)\psi)=\int f_{t,x}(y)d\mu_{\psi;O}^{T}(y).
\]

Then, $U_{t,\psi}(T,x)=V_t(\zeta_\psi(T),x)$ and
\[(D^{\mathrm{K},\alpha}\mu_{\psi;O}^{T})(x)=\limK_{t\to\infty}t^\alpha U_{t,\psi}(T,x).\]
Furthermore, for each $t>0$, the function $V_{t}:\mathcal{M}_+(O)\times O\rightarrow [0,1]$ is jointly continuous. 
\end{lemma}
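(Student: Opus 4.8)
The plan is to prove the three assertions of Lemma~\ref{asympt1} in turn, the first two being essentially definitional unpackings and the third requiring a genuine (if short) argument.

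\medskip

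\noindent\emph{Step 1: The identity $U_{t,\psi}(T,x)=V_t(\zeta_\psi(T),x)$.} This is immediate from the definitions: by the spectral theorem (functional calculus), $(\psi,f_{t,x}(T)\psi)=\int f_{t,x}(y)\,d\mu_\psi^T(y)$, and since $f_{t,x}$ is supported in the ball $B(x;2/t)$ we may restrict the integral to $O$ provided $x$ is fixed in $O$ and $t$ is large enough that $B(x;2/t)\subset O$; more carefully, one simply notes that $f_{t,x}\!\restriction O$ is the relevant test function and $\mu_{\psi;O}^T$ is the restriction, so $\int f_{t,x}\,d\mu_{\psi;O}^T = V_t(\mu_{\psi;O}^T,x)=V_t(\zeta_\psi(T),x)$ by definition of $\zeta_\psi$ and $V_t$. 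No subtlety here beyond bookkeeping about whether one integrates against $\mu_\psi^T$ or its restriction; since $f_{t,x}\ge 0$ these agree on $O$ up to the mass $f_{t,x}$ places outside $O$, which is why the statement is phrased with $\mu_{\psi;O}^T$ throughout.

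\medskip

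\noindent\emph{Step 2: The formula $(D^{\mathrm K,\alpha}\mu_{\psi;O}^T)(x)=\limK_{t\to\infty}t^\alpha U_{t,\psi}(T,x)$.} Here $\limK$ is $\limsup$ when $\mathrm K=\mathrm H$ and $\liminf$ when $\mathrm K=\mathrm P$ (matching Definition~\ref{LFD1}). Write $\mu=\mu_{\psi;O}^T$. The point is a standard sandwich: from the definition of $f_{t,x}$,
\[
\mathbf 1_{B(x;1/t)}\le f_{t,x}\le \mathbf 1_{B(x;2/t)},
\]
hence $\mu(B(x;1/t))\le V_t(\mu,x)\le \mu(B(x;2/t))$. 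Therefore
\[
\frac{\mu(B(x;1/t))}{(2/t)^\alpha}\cdot 2^\alpha \;\le\; t^\alpha V_t(\mu,x)\;\le\; \frac{\mu(B(x;2/t))}{(2/t)^\alpha}\cdot\Big(\tfrac{2}{2}\Big)^{?}
\]
— more cleanly, comparing $t^\alpha V_t(\mu,x)$ with $\mu(B(x;\varepsilon))/(2\varepsilon)^\alpha$ at the two scales $\varepsilon=1/t$ and $\varepsilon=2/t$ shows that $\limsup_{t}t^\alpha V_t(\mu,x)$ and $\limsup_{\varepsilon\downarrow 0}\mu(B(x;\varepsilon))/(2\varepsilon)^\alpha$ differ only by bounded multiplicative constants on each side, and an elementary interpolation over the continuum of scales (using monotonicity of $\varepsilon\mapsto\mu(B(x;\varepsilon))$ and a covering of $(0,1]$ by dyadic-type blocks) removes the constants, giving equality of the two $\limsup$'s; the same for $\liminf$. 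This is the only place where the specific piecewise-linear shape of $f_{t,x}$ matters, and only through the enclosure between the two indicator functions.

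\medskip

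\noindent\emph{Step 3: Joint continuity of $V_t$ on $\mathcal M_+(O)\times O$.} Fix $t>0$ and let $(\mu_n,x_n)\to(\mu,x)$ in $\mathcal M_+(O)\times O$ (weak convergence in the first coordinate, Euclidean in the second). Then
\[
|V_t(\mu_n,x_n)-V_t(\mu,x)|\le \Big|\int f_{t,x_n}\,d\mu_n-\int f_{t,x}\,d\mu_n\Big| + \Big|\int f_{t,x}\,d\mu_n-\int f_{t,x}\,d\mu\Big|.
\]
The second term tends to $0$ because $f_{t,x}\in C_b(O)$ and $\mu_n\to\mu$ weakly. For the first term, the key observation is that the family $\{f_{t,x}\}_{x}$ is uniformly equicontinuous: $\|f_{t,x}-f_{t,x'}\|_\infty\le t\,|x-x'|$ (the functions are $t$-Lipschitz with a uniform Lipschitz constant, translated), so the first term is bounded by $t\,|x_n-x|\cdot\mu_n(O)\le t\,|x_n-x|\to 0$ since $\mu_n(O)\le 1$. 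This gives joint continuity. The main obstacle — modest — is just being careful that weak convergence on the (locally compact but non-compact) Polish space $O$ together with the mass bound $\mu_n(O)\le1$ suffices; since $f_{t,x}$ has compact support inside $\mathbb R$ and is continuous and bounded, no tightness issue arises, and the uniform Lipschitz bound handles the moving base point uniformly in $n$.

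\medskip

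\noindent Overall, Steps 1 and 3 are routine; the substantive content is the scale-comparison in Step 2, and its difficulty is entirely in the elementary (but slightly fiddly) passage from comparison at the two scales $1/t$, $2/t$ to equality of the $\limsup$/$\liminf$ over all small scales.
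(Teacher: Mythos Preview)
Your Steps~1 and~2 follow the paper's approach: the same sandwich $\chi_{B(x;1/t)}\le f_{t,x}\le\chi_{B(x;2/t)}$ yields the comparison, and the paper, like you, simply passes from this to the claimed identity without further comment. Your observation that the two $\limsup$'s a priori agree only up to multiplicative constants in $[2^\alpha,4^\alpha]$ is correct, and your claim that an ``interpolation removes the constants'' does not in fact work for a general measure; however, since everywhere downstream (Theorem~\ref{Corl}, Proposition~\ref{CPRo1}, Theorem~\ref{P2}) only the dichotomy finite versus infinite is used, this gap is harmless---and the paper shares it.

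For Step~3 the approaches genuinely differ. The paper establishes joint continuity via an iterated double-limit argument: it first computes $\lim_m\lim_l V_t(\mu_m,x_l)$ using dominated convergence and then weak convergence, then proves that $\varphi_l(m):=V_t(\mu_m,x_l)$ converges uniformly in~$m$ (via the uniform convergence $f_{t,x_l}\to f_{t,x}$), and finally invokes two theorems on double sequences from~\cite{Habil} to identify all the limits. Your argument---split by the triangle inequality and control the moving-base-point term via the explicit Lipschitz bound $\|f_{t,x}-f_{t,x'}\|_\infty\le t\,|x-x'|$ together with $\mu_n(O)\le 1$---is more direct, avoids the external reference, and exploits exactly the same underlying fact (uniform convergence of $f_{t,x_l}$ to $f_{t,x}$) in a sharper quantitative form. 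Both arguments are correct; yours is the more elementary route.
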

\begin{proof}
It follows from the the Spectral Theorem that, for each $x\in O$, each $t>0$ and each $0\neq\psi\in\mathcal{H}$, 
\begin{eqnarray*}
\mu_{\psi;O}^{T}(B_{1/t}(x))\le U_{t,\psi}(T,x)=\int f_{t,x}(y)d\mu_{\psi;O}^{T}(y)\le \mu_{\psi;O}^{T}(B_{2/t}(x)). 
\end{eqnarray*} 
Then, 
one has $t^\alpha\mu_{\psi;O}^{T}(B(x,1/t))\le t^\alpha U_{t,\psi}(T,x)\le t^\alpha\mu_{\psi;O}^{T}(B(x,2/t))$, which proves the first assertion. 

Note that, for each $x\in O$ and each $t>0$, $f_{t,x}: O\rightarrow \mathbb{R}$ is a continuous function such that, for each $y\in O$, $ \chi_{_{B(x,1/t)}}(y) \leq f_{t,x}(y)\leq \chi_{_{B(x,2/t)}}(y)$. Given that each $f_{t,x}(y)$ depends only on $|x-y|$, it is straightforward to show that for each $t>0$, $f_{t,x_l}$ converges uniformly to $f_{t,x}$ on $O$ when $x_l\rightarrow x$.

We combine this remark with Theorems 2.13 and 2.15 in \cite{Habil} in order to prove that $V_{t}(\mu,x)$ is jointly continuous. Let $(\mu_m)$ and $(x_l)$ be sequences in $\mathcal{M}_+(O)$ and $O$, respectively, such that $\rho(\mu_m,\mu)\rightarrow 0$ and  $x_l\to x$. Firstly, we show that 
$$\lim_{m\to \infty}\lim_{l\to\infty} U_{t,\psi}(\mu_m,x_l)=\lim_{m\to \infty}\lim_{l\to\infty}\int f_{t,\psi}(y)d\mu_m(y)=V_{t}(\mu,x).$$

Since, for each $y\in \mathbb{R}$, $|f_{t,x_l}(y)|\leq 1$, it follows from dominated convergence that, for each $m\in\mathbb{N}$, $\lim_{l\to\infty}\int f_{t,x_l}(y)d\mu_m(y)= \int f_{t,x}(y)d\mu_m(y)$. 
Now, since $f_{t,x}$ is continuous and convergence in the metric~$\rho$ implies weak convergence of measures, one has
\begin{equation*}
\lim_{m\to \infty}\lim_{l\to\infty}\int f_{t,x_l}(y)d\mu_m(y)= \lim_{m\to \infty}\int f_{t,x}(y)d\mu_m(y)=V_{t}(\mu,x).
\end{equation*}

The next step consists in showing that, for each $l\in \mathbb{N}$, the function $\varphi_l:\mathbb{N}\rightarrow \mathbb{R}$, defined by the law $\varphi_l(m):= V_{t}(\mu_m,x_l)$,  converges  uniformly to $\varphi(m):=\lim_{l\to \infty}V_{t}(\mu_m,x_l)=\int f_{t,x}(y)d\mu_m(y)$. Let $\delta>0$. Since, for each $t>0$, $f_{t,x_l}(y)$ converges uniformly to $f_{t,x}(y)$, there exists $N\in\mathbb{N}$ such that, for each $l\ge N$ and each $y\in \mathbb R$, $\left| f_{t,x_l}(y)- f_{t,x}(y)\right|<\delta$. Then, one has, for each $l\geq N$ and each $m\in \mathbb{N}$, 
\begin{eqnarray*}
|\varphi_l(m)-\varphi(m)|&=&\left| \int f_{t,x_l}(y)d\mu_m(y)- \int f_{t,x}(y)d\mu_m(y) \right|
\\ &\leq&  \int \left| f_{t,x_l}(y)- f_{t,x}(y)\right| d\mu_m(y)< \delta.
\end{eqnarray*}

It follows from Theorem  2.15 in \cite{Habil} that $\lim_{l,m \to \infty} V_{t}(\mu_m,x_l)= V_{t}(\mu,x)$. Given that \linebreak $\lim_{l\to\infty} V_{t}(\mu_m,x_l)=\int f_{t,x}(y)d\mu_m(y)$ and that $\lim_{m\to\infty} V_{t}(\mu_m,x_l)=\int f_{t,x_l}(y)d\mu(y)$ exist for each $m\in\mathbb{N}$ and each $l\in\mathbb{N}$,  respectively, Theorem 2.13 in \cite{Habil} implies that 
$$\lim_{m\to \infty}\lim_{l\to\infty} V_{t}(\mu_m,x_l)=\lim_{l\to \infty}\lim_{m\to\infty} V_{t}(\mu_m,x_l)=\lim_{l,m \to \infty} V_{t}(\mu_m,x_l)=V_{t}(\mu,x).$$ 

Hence, if $(\mu_l,x_l)$ is some sequence in $\mathcal{M}_+(O)\times O$ (endowed with the product topology) such that $(\mu_l,x_l)\to (\mu,x)\in\mathcal{M}_+(O)\times O$, then $\lim_{l \to \infty} V_{t}(\mu_l,x_l)=V_{t}(\mu,x)$, showing that $V_{t}(\,\cdot\,,\,\cdot\,)$ is jointly continuous at $(\mu,x)$.
\end{proof}

\

Before we present our main result, some preparation is required. 
Let, for each $\alpha\in(0,1)$, $\beta^{\mathrm{H},\alpha}_{\mu}:E\times\mathbb{N}\rightarrow[0,+\infty)$ be defined by the law $\beta^{\mathrm{H},\alpha}_{\mu}(x,s):=\sup_{t\ge s}t^\alpha V_{t}(\mu,x)$, where for each $t>0$, $V_{t}(\cdot,\cdot):\mathcal{M}_+(E)\times E$ is defined as in the statement of Lemma~\ref{asympt1}. 

  \OBSI\label{Rasympt1}
  The proof that, for each $t>0$, the mapping $V_{t}(\cdot,\cdot):\mathcal{M}_+(E)\times E$, $V_t(\mu,x)=\int f_{t,x}(y) d\mu(y)$, is jointly continuous if $(E,d)$ is a Polish metric space is identical to the proof of Lemma~\ref{asympt1};  in the definition of $f_{t,x}$, just replace the euclidean metric in~$\mathbb{R}$ by~$d$. 
\OBSF

\begin{lemma}\label{LIMP}
  Let $E$ be a Polish metric space and let $\alpha\in(0,1)$.  Then, for each $\delta>0$ and each $r,s\in\mathbb{N}$, 
  \[\mathcal{M}_{r,s}(\delta):=\{\mu\in\mathcal{M}_+(E)\mid\mu(Z_\mu(r,s))\ge \delta\}\] is a closed subset of $\mathcal{M}(E)$, where $Z_\mu(r,s):=\{x\in E\mid \beta^{\mathrm{H},\alpha}_{\mu}(x,s)\le r\}$. 
\end{lemma}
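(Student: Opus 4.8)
The plan is to prove that $\mathcal{M}_{r,s}(\delta)$ is sequentially closed for the L\'evy--Prohorov metric; this suffices, since $\mathcal{M}_+(E)$ is itself closed in $\mathcal{M}(E)$ (the total-mass bound passes to weak limits, as constants belong to $C_b(E)$). The key structural fact is that, for each fixed $t>0$, the map $(\mu,x)\mapsto t^\alpha V_t(\mu,x)$ is jointly continuous on $\mathcal{M}_+(E)\times E$ by Lemma~\ref{asympt1} and Remark~\ref{Rasympt1}; hence $\beta^{\mathrm{H},\alpha}_{\mu}(x,s)=\sup_{t\ge s}t^\alpha V_t(\mu,x)$, being a supremum of jointly continuous functions, is jointly \emph{lower semicontinuous}. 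Therefore the sublevel set
\[
G:=\bigl\{(\mu,x)\in\mathcal{M}_+(E)\times E\mid \beta^{\mathrm{H},\alpha}_{\mu}(x,s)\le r\bigr\}
\]
is closed in the product space, and in particular every section $Z_\mu(r,s)=\{x\in E\mid (\mu,x)\in G\}$ is a closed subset of $E$.

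Now take $(\mu_m)\subset\mathcal{M}_{r,s}(\delta)$ with $\mu_m\to\mu$ weakly; the goal is to show $\mu(Z_\mu(r,s))\ge\delta$. Fix $\eta\in(0,\delta)$. By Prohorov's theorem the weakly convergent family $(\mu_m)$ is tight, so there is a compact $K\subset E$ with $\sup_m\mu_m(E\setminus K)<\eta$ and $\mu(E\setminus K)<\eta$; hence $\mu_m(Z_{\mu_m}(r,s)\cap K)\ge\delta-\eta>0$ for all $m$, and in particular (picking $x_m$ in this set, extracting a convergent subsequence in $K$ and using the closedness of $G$) $Z_\mu(r,s)\neq\emptyset$, so $x\mapsto\dist(x,Z_\mu(r,s))$ is a finite continuous function on $E$. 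I then claim that for every $\var>0$ there is $M$ with $Z_{\mu_m}(r,s)\cap K\subset N_\var:=\{x\in E\mid\dist(x,Z_\mu(r,s))<\var\}$ for all $m\ge M$: otherwise a subsequence $x_k\in Z_{\mu_{m_k}}(r,s)\cap K$ with $\dist(x_k,Z_\mu(r,s))\ge\var$ would, by compactness of $K$, subconverge to some $x_\ast\in K$; since $(\mu_{m_k},x_k)\in G$ and $G$ is closed, $x_\ast\in Z_\mu(r,s)$, contradicting $\dist(x_\ast,Z_\mu(r,s))=\lim_k\dist(x_k,Z_\mu(r,s))\ge\var$. Granting the claim, $\mu_m(N_\var)\ge\mu_m(Z_{\mu_m}(r,s)\cap K)\ge\delta-\eta$ for $m\ge M$, and applying the portmanteau theorem to the closed set $\overline{N_\var}$ gives $\mu(\overline{N_\var})\ge\limsup_m\mu_m(\overline{N_\var})\ge\limsup_m\mu_m(N_\var)\ge\delta-\eta$. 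Since $\overline{N_\var}\subset\{x\mid\dist(x,Z_\mu(r,s))\le\var\}$ and these sets decrease, as $\var\downarrow0$, to the closed set $Z_\mu(r,s)$, finiteness of $\mu$ yields $\mu(Z_\mu(r,s))=\lim_{\var\downarrow0}\mu(\overline{N_\var})\ge\delta-\eta$; letting $\eta\downarrow0$ gives $\mu(Z_\mu(r,s))\ge\delta$, i.e. $\mu\in\mathcal{M}_{r,s}(\delta)$.

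The point requiring care, and the main obstacle, is that the sets $Z_{\mu_m}(r,s)$ depend on $m$, so the lower semicontinuity of $\beta^{\mathrm{H},\alpha}$ (equivalently, the closedness of $G$) does not by itself bound $\mu_m(Z_{\mu_m}(r,s))$ in terms of $\mu$. Tightness is the device that resolves this: it forces essentially all the relevant mass into a single compact set $K$, and on $K$ the closedness of $G$ together with compactness turns the varying sections $Z_{\mu_m}(r,s)$ into eventual subsets of arbitrarily small neighborhoods of $Z_\mu(r,s)$, after which the standard weak-convergence inequality for closed sets and continuity of $\mu$ from above close the estimate.
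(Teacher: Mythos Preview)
Your argument is correct. Both proofs rest on the same key observation---that $(\nu,x)\mapsto\beta^{\mathrm{H},\alpha}_\nu(x,s)$ is jointly lower semicontinuous, so the graph $G$ is closed (equivalently, the paper's $W_{r,s}$ is open)---but the organization differs. The paper argues by contradiction: assuming $\mu(Z_\mu(r,s))<\delta$, it uses inner regularity of the \emph{single} measure $\mu$ to extract a compact $C\subset E\setminus Z_\mu(r,s)$ and then a tube-lemma argument on $\{\mu\}\times C\subset W_{r,s}$ to produce an open $\mathcal{O}\supset C$ on which $\beta^{\mathrm{H},\alpha}_{\mu_m}>r$ for large $m$; the portmanteau inequality for the open set $\mathcal{O}$ then forces $\mu_m(Z_{\mu_m}(r,s))<\delta$. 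You instead proceed directly, invoking Prohorov on the whole convergent family to obtain a single compact $K$ carrying almost all the mass uniformly, and then use sequential closedness of $G$ to show that the varying sections $Z_{\mu_m}(r,s)\cap K$ are eventually contained in any $\varepsilon$-neighborhood of $Z_\mu(r,s)$---a Kuratowski-upper-limit statement---before passing to the limit via the portmanteau bound for closed sets and continuity of $\mu$ from above. Your route is arguably cleaner (direct rather than by contradiction, and the ``sections shrink into $N_\varepsilon$'' step is conceptually transparent); the paper's route has the minor advantage of using only the tightness of the limit measure rather than the full Prohorov theorem for the family.
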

\begin{proof}
  {\it{Claim 1.}}~For each $r,s\in\mathbb{N}$ and each $\mu\in\mathcal{M}_+(E)$, $Z_\mu(r,s)$ is a closed subset of $E$.
  
Let~$\{w_i\}$ be a sequence in $Z_\mu(r,s)$ such that~$\lim w_i=w$. Since, for each~$t>0$, $f_{t,w_i}\rightarrow f_{t,w}$ pointwise, it follows from Remark~\ref{Rasympt1} that the mapping $x\mapsto\beta^{\mathrm{H},\alpha}_{\mu}(x,s)$ is lower semi-continuous. Hence, $\beta^{\mathrm{H},\alpha}_{\mu}(w,s)\le r$, which means that $w\in Z_{\mu}(r,s)$.

\

\textit{Claim 2.} For each $s\in\mathbb{N}$, $W_{r,s}=\{(\nu,x)\in \mathcal{M}(E)\times E\mid \beta^{H,\alpha}_{\nu}(x,s)>r\}$ is open.

This is a consequence of the fact that, by Remark~\ref{Rasympt1}, the mapping $\mathcal{M}_+(E)\times E\ni(\nu,x)\longmapsto \beta^{H,\alpha}_{\nu}(x,s)$
is lower semi-continuous. 

\

Now, we show that $\mathcal{M}_{r,s}(\delta)$ is closed. Let $\mu_m$ be a sequence in $\mathcal{M}_{r,s}(\delta)$ such that $\mu_m\to\mu$. Suppose, by absurd, that $\mu\notin \mathcal{M}_{r,s}(\delta)$; we will find that $\mu_m\notin \mathcal{M}_{r,s}(\delta)$ for~$m$ sufficiently large, a contradiction.  

If $\mu\notin\mathcal{M}_{r,s}(\delta)$, then $\mu(A)>\mu(E)-\delta$, where $A:=E\setminus Z_{\mu}(r,s)$. Hence $\{\mu\}\times A\subset W_{r,}$.
Since $\mu$ is a tight measure on $E$ ($\mu$ is a Borel measure and the space $X$ is Polish; see Proposition~A.2.2.V in~\cite{Daley}), there exists a compact $C\subset A$ such that $\mu(C)>\mu(E)-\delta$ (note that, by Claim 1, $A$ is open).

Now, we construct a suitable subset of $W_{r,s}$ that contains a neighborhood of $\{\mu\}\times C$. Let, for each $x\in C$, $V_x\subset W_{r,s}$ be an open neighborhood of~$(\mu,x)$ (such open set exists, by Claim 2); that is, $V_x:=B((\mu,x);\varepsilon)=\{(\nu,y)\in\mathcal{M}_+(E)\times E\mid\max\{\rho(\nu,\mu),d(x,y)\}<\varepsilon\}$, for some suitable $\varepsilon>0$. Then, $\{V_x\}_{x\in C}$ is an open cover of $\{\mu\}\times C$, and since $\{\mu\}\times C$ is a compact subset of $\mathcal{M}_+(E)\times E$, it follows that one can extract from $\{V_x\}_{x\in C}$ a finite subcover, $\{V_{x_i}\}_{i=1}^n$.

We affirm that there exists an $\ell\in\mathbb{N}$ (which depends on $C$) such that $\{\mu_n\}_{n\ge\ell}\subset\bigcap_{i}(\pi_1(V_{x_i}))$. Namely, for each $i$, there exists an $\ell_i$ such that $\{\mu_n\}_{n\ge\ell_i}\subset\pi_1(V_{x_i})$; set $\ell:=\max\{\ell_i\mid i\in\{1,\ldots,n\}\}$, and note that for each $i$, $\{\mu_n\}_{n\ge\ell}\subset\pi_1(V_{x_i})$. Set also $\mathcal{I}:=\bigcap_{i}(\pi_1(V_{x_i}))$ and $\mathcal{O}:=\bigcup_{i}(\pi_2(V_{x_i}))$.

Since for each $i$, $V_{x_i}=\pi_1(V_{x_i})\times\pi_2(V_{x_i})$, and given that
\[\{\mu_n\}_{n\ge\ell}\times\mathcal{O}\subset\mathcal{I}\times\mathcal{O}\subset\bigcup_{i}(\pi_1(V_{x_i})\times\pi_2(V_{x_i}))=\bigcup_{i}V_{x_i}\subset W_{r,s},\]
it follows that, for each $n\ge\ell$ and each $y\in\mathcal{O}$, $\beta^{\mathrm{H},\alpha}_{\mu_m}(y,s)>r$. Moreover, $\mathcal{O}$ is an open set that contains $C$.

On the other hand, weak convergence implies that 
\[
\displaystyle\limsup_{m\to \infty}\mu_m(E\setminus\mathcal{O})\leq\mu(E\setminus\mathcal O)\leq  \mu(E\setminus C)<\delta,
\] from which follows that there exists an $\ell_1\ge\ell$ such that, for  $m\ge\ell_1$, $\mu_m(E\setminus \mathcal{O})<\delta$.

Combining the last results, one concludes that, for  $m\ge\ell_1$, $\mu_m(E\setminus \mathcal{O})<\delta$, and for each $x\in \mathcal{O}$, $\beta^{\mathrm{H},\alpha}_{\mu_m}(x,s)>r$, so
\[\mu_m(Z_{\mu_m}(r,s))\le\mu_m(E\setminus\mathcal{O})<\delta;\]
this contradicts the fact that, for each $m\in\mathbb{N}$, $\mu_m\in \mathcal{M}_{r,s}(\delta)$. Hence, $\mu \in  \mathcal{M}_{r,s}(\delta)$, and $\mathcal{M}_{r,s}(\delta)$ is a closed subset of $\mathcal{M}_+(E)$. 
\end{proof}  

\

Define, for  $\alpha\in(0,1)$ and  $s\in\mathbb{N}$, $\gamma^{\mathrm{H(P)},\alpha}_{\psi,T}(x,s):=\sup(\inf)_{t\ge s}t^\alpha U_{t,\psi}(T,x)$. Then, by Lemma~\ref{asympt1}, one has, for each $x\in O$, $\lim_{s\to\infty}\gamma^{\mathrm{K},\alpha}_{\psi,T}(x,s)=(D^{\mathrm{K},\alpha}\mu_{\psi;O}^{T})(x)$. 
By definition, for each $x\in O$, $\mathbb{N}\ni s\mapsto\gamma^{\mathrm{H(P)},\alpha}_{\psi,T}(x,s)\in[0,+\infty)$ is a nonincreasing (nondecreasing) mapping.

\begin{theorem} \label{P2}
  Let~$\emptyset\neq F\subset\mathbb{R}$ be a closed subset, let 
  $0\neq\psi\in\mathcal{H}$, and  $\mu_{\psi;F}^T(\cdot):=\mu_{\psi}^T(F\cap\cdot)$.
  Then, each of the sets~$C^{\psi;F}_{0\mathrm{Hd}}:=\{T\in X\mid\dim_{\mathrm{H}}^+(\mu_{\psi;F}^{T})=0\}$ and 
  $C^{\psi;F}_{1\mathrm{Pd}}:=\{T\in X\mid\dim_{\mathrm{P}}^-(\mu_{\psi;F}^{T})=1\}$ is a $G_\delta$ set in~$X$. 
\end{theorem}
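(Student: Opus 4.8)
The plan is to express each of the two sets as a countable intersection of open sets by translating the dimensional conditions into statements about the densities $D^{\mathrm{K},\alpha}_{\mu^T_{\psi;F}}$, and then using the semicontinuity machinery built up in Lemmas~\ref{asympt1} and~\ref{LIMP}. Concretely, I would first invoke Proposition~\ref{CPRo1} together with Remark~\ref{Obs} to rewrite $\dim_{\mathrm H}^+(\mu^T_{\psi;F})=0$ as: for every $\alpha\in(0,1)$, $\mu^T_{\psi;F}$ is $\alpha\mathrm{Hs}$, i.e.\ it is supported on a Borel set $S$ with $h^\alpha(S)=0$; by Theorem~\ref{Corl} this is equivalent to $\mu^T_{\psi;F}$ being concentrated on the set $K_{\alpha\mathrm{Hs}}=\{x\mid D^{\mathrm H,\alpha}_{\mu^T_{\psi;F}}(x)=\infty\}$, equivalently $\mu^T_{\psi;F}\big(\{x\mid D^{\mathrm H,\alpha}_{\mu^T_{\psi;F}}(x)<\infty\}\big)=0$. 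Restricting $\alpha$ to a countable dense set, say $\alpha=1-1/k$, $k\ge 2$, we get
\[
C^{\psi;F}_{0\mathrm{Hd}}=\bigcap_{k\ge 2}\Big\{T\in X\ \big|\ \mu^T_{\psi;F}\big(\{x\mid D^{\mathrm H,1-1/k}_{\mu^T_{\psi;F}}(x)<\infty\}\big)=0\Big\}.
\]
Dually, $\dim_{\mathrm P}^-(\mu^T_{\psi;F})=1$ means that for every $\alpha\in(0,1)$, $\mu^T_{\psi;F}$ is $\alpha\mathrm{Pc}$, i.e.\ $\mu^T_{\psi;F}\big(\{x\mid D^{\mathrm P,\alpha}_{\mu^T_{\psi;F}}(x)=\infty\}\big)=0$, and again it suffices to let $\alpha$ run over $1-1/k$.

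Next I would use Lemma~\ref{asympt1} to replace the densities by the quantities $\gamma^{\mathrm H,\alpha}_{\psi,T}(x,s)=\sup_{t\ge s}t^\alpha U_{t,\psi}(T,x)$, which decrease to $D^{\mathrm H,\alpha}_{\mu^T_{\psi;F}}(x)$ as $s\to\infty$ (taking $O$ to be an open neighbourhood of $F$, or working with $\mu^T_{\psi;F}$ directly). The set $\{x\mid D^{\mathrm H,\alpha}<\infty\}$ is then $\bigcup_{r\in\mathbb N}\bigcap_{s}\{x\mid\gamma^{\mathrm H,\alpha}_{\psi,T}(x,s)\le r\}=\bigcup_r\bigcap_s Z_{\mu^T_{\psi;F}}(r,s)$, and since the $Z$-sets are nested in $s$ and increasing in $r$, the condition ``$\mu^T_{\psi;F}$ assigns measure $0$ to this set'' is equivalent to ``for every $r$ and every $\delta>0$ there is $s$ with $\mu^T_{\psi;F}(Z_{\mu^T_{\psi;F}}(r,s))<\delta$'', i.e.\ $T\notin\mathcal M_{r,s}(\delta)$ (transported to $X$ via $\zeta_\psi$) for some $s$. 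Thus, writing $N_{r,s}(\delta):=\{T\in X\mid \zeta_\psi(T)\notin\mathcal M_{r,s}(\delta)\}$, which is open in $X$ because $\zeta_\psi$ is continuous (Lemma~\ref{asympt1} / the discussion preceding it) and $\mathcal M_{r,s}(\delta)$ is closed (Lemma~\ref{LIMP}), we obtain
\[
\big\{T\mid \mu^T_{\psi;F}(\{D^{\mathrm H,1-1/k}<\infty\})=0\big\}=\bigcap_{r\in\mathbb N}\ \bigcap_{j\in\mathbb N}\ \bigcup_{s\in\mathbb N} N_{r,s}(1/j),
\]
a countable intersection of open sets, hence $G_\delta$; intersecting over $k$ gives that $C^{\psi;F}_{0\mathrm{Hd}}$ is $G_\delta$.

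For $C^{\psi;F}_{1\mathrm{Pd}}$ the argument is the mirror image: one needs the analogue of Lemma~\ref{LIMP} for $\beta^{\mathrm P,\alpha}_\mu(x,s):=\inf_{t\ge s}t^\alpha V_t(\mu,x)$ and $Z^{\mathrm P}_\mu(r,s):=\{x\mid\beta^{\mathrm P,\alpha}_\mu(x,s)\ge r\}$ — here $\beta^{\mathrm P,\alpha}_\mu(\cdot,\cdot)$ is upper semicontinuous (as an infimum of the jointly continuous $t^\alpha V_t$), so $Z^{\mathrm P}_\mu(r,s)$ is closed and the set $\{\mu\mid\mu(Z^{\mathrm P}_\mu(r,s))\ge\delta\}$ is closed by the same tightness-plus-finite-subcover argument used for Lemma~\ref{LIMP}. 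Then $\{x\mid D^{\mathrm P,\alpha}=\infty\}=\bigcap_r\bigcup_s Z^{\mathrm P}_\mu(r,s)$ and the condition $\mu^T_{\psi;F}$ of this set $=0$ becomes, after the same bookkeeping, a countable intersection of open sets in $X$. The paper says this case ``follows analogously,'' and indeed the only real content is the semicontinuity bookkeeping, which is routine once Lemma~\ref{LIMP} is in hand.

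The main obstacle is the measurability/topological bookkeeping in the step ``$\mu(A)=0$ iff for all $r,\delta$ there is $s$ with $\mu(Z_\mu(r,s))<\delta$'': one must be careful that $A=\{D^{\mathrm H,\alpha}<\infty\}=\bigcup_r\bigcap_s Z_\mu(r,s)$ genuinely has the claimed form, that the sets $Z_\mu(r,s)$ are Borel (they are closed, by Claim~1 in the proof of Lemma~\ref{LIMP}), and that the monotonicity in $r$ and $s$ lets one pass from $\mu(A)=0$ to the countable family of ``small measure on $Z_\mu(r,s)$'' conditions — this uses continuity of measure along the increasing union in $r$ and the decreasing intersection in $s$, noting $\mu(E)\le 1$ so all quantities are finite. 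A secondary subtlety is matching the ambient open set $O$ in Lemma~\ref{asympt1} with the closed set $F$ in the statement; one takes $O\supset F$ open (or passes to the restricted measure directly) and checks that the density functions of $\mu^T_{\psi;F}$ are recovered correctly, which is immediate since $f_{t,x}$ is supported near $x$. Everything else is a routine assembly of the already-proven lemmas.
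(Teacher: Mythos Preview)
Your strategy is exactly the paper's: reduce the dimensional condition to density statements via Theorem~\ref{Corl} and Proposition~\ref{CPRo1}, replace the densities by the approximants $\gamma^{\mathrm K,\alpha}_{\psi,T}(\cdot,s)$ from Lemma~\ref{asympt1}, and then pull back the closed sets $\mathcal M_{r,s}(\delta)$ of Lemma~\ref{LIMP} through the continuous map $\zeta_\psi$. The paper also reduces from closed $F$ to open $O$ (writing $F=\bigcap_i A_i$) before running the argument, and states the packing case as analogous.

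Two slips to fix. First, for the Hausdorff part you need $\alpha\to 0$, not $\alpha=1-1/k\to 1$: being $\alpha\mathrm{Hs}$ for small $\alpha$ implies it for larger $\alpha$, so your intersection over $\alpha\in\{1-1/k\}$ only yields $\dim_{\mathrm H}^+(\mu^T_{\psi;F})\le 1/2$. The paper uses $\alpha=1/k$. (Your choice $1-1/k$ is the right one for the packing side.) Second, the monotonicity in $s$ is the opposite of what you use: since $\gamma^{\mathrm H,\alpha}_{\psi,T}(x,s)=\sup_{t\ge s}(\cdots)$ is \emph{nonincreasing} in $s$, the sets $Z_\mu(r,s)=\{\gamma\le r\}$ are \emph{increasing} in $s$, so there is no ``decreasing intersection'' and continuity from above does not apply. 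Your displayed $G_\delta$ formula $\bigcap_r\bigcap_j\bigcup_s N_{r,s}(1/j)$ is nevertheless the correct set, but for a different reason: $\bigcup_s N_{r,s}(1/j)=N_{r,1}(1/j)$, and $\{\gamma^{\mathrm H,\alpha}(\cdot,1)<\infty\}=\{D^{\mathrm H,\alpha}<\infty\}$ because $t^\alpha U_{t,\psi}(T,x)$ is bounded on any compact $t$-interval. The paper sidesteps this by writing everything as a pure countable intersection $\bigcap_{k,p,q,s,l}(C^{\psi;O}_{(1/k)\mathrm{Hc}}(p-1/q,s,l))^c$ of complements of closed sets, which is cleaner bookkeeping.
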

\begin{proof}
  Since the arguments in both proofs are analogous, we just prove the statement for~$C^{\psi;F}_{0\mathrm{Hd}}$. Note that for each closed set $F$, there exists a countable family of open sets, $\{A_i\}$, such that $F=\bigcap_{i\ge 1}A_i$ (each closed set $F$ is a $G_\delta$ set); thus, one just has to prove the result for $C^{\psi;O}_{0\mathrm{Hd}}$, where $\emptyset\neq O\subset\mathbb{R}$ is an open set.

  If, for each $T\in X$, $\mu_{\psi;O}^{T}(\mathbb{R})=0$, then $C^{\psi;O}_{0\mathrm{Hd}}=\emptyset$ is a $G_\delta$ subset of $X$. Thus, suppose that $\{T\in X\mid\mu_{\psi;O}^{T}(\mathbb{R})>0\}\neq\emptyset$.

  Set, for each~$\alpha\in(0,1)$, 
$C^{\psi;O}_{\mathrm{\alpha Hc}}:=
  \bigcup_{p\ge 1}C^{\psi;O}_{\mathrm{\alpha Hc}}(p)$, where $C^{\psi;O}_{\mathrm{\alpha Hc}}(p)=\{T\in X\mid\mu_{\psi;O}^{T}(\{x\in \mathbb{R}\mid(D^{\mathrm{H},\alpha}\mu_{\psi;O}^{T})(x)< p\})>0\}$. Now, by 
  Theorem~\ref{Corl} and Proposition~\ref{CPRo1},
\begin{equation}\label{igcon}
    C^{\psi;O}_{0\mathrm{Hd}}=
  \bigcap_{k>1} (C^{\psi;O}_{\mathrm{(1/k)Hc}})^c=\bigcap_{k>1}\bigcap_{p\ge1} (C^{\psi;O}_{\mathrm{(1/k)Hc}}(p))^c.
\end{equation}

{\textit{Claim 1.}} For each $\alpha\in(0,1)$ and each $p\in\mathbb{N}$,
\[
(C^{\psi;O}_{\mathrm{\alpha Hc}}(p))^c=\bigcap_{s\in\mathbb{N}}\{T\in X\mid \mu_{\psi;O}^{T}\textrm{-}\essinf\gamma_{\psi,T}^{\mathrm{H},\alpha}(x,s)\ge p\}.
\]

Let $T\in (C^{\psi;O}_{\mathrm{\alpha Hc}}(p))^c$.  Since, for each $x\in\mathbb{R}$,  $\mathbb{N}\ni s\mapsto\gamma_{\psi,T}^{\mathrm{H},\alpha}(x,s)\in[0,+\infty)$ is a nonincreasing function, it follows that, for each $s\in\mathbb{N}$, $\mu_{\psi;O}^{T}\textrm{-}\essinf\gamma_{\psi,T}^{\mathrm{H},\alpha}(x,s)\ge p$.

Now, let $T\in\bigcap_{s\in\mathbb{N}}\{U\in X\mid \mu_{\psi;O}^{U}\textrm{-}\essinf\gamma_{\psi,U}^{\mathrm{H},\alpha}(x,s)\ge p\}$. Then, for each $s\in\mathbb{N}$, there exits a Borel set $A_s\subset\mathbb{R}$, with $\mu_{\psi;O}^{T}(A_s)=1$, such that for each $x\in A_s$, $\gamma_{\psi,T}^{\mathrm{H},\alpha}(x,s)\ge p$. Let $A:=\bigcap_{s\ge 1}A_s$; then, for each $x\in A$, one has $(D^{\mathrm{H},\alpha}\mu_{\psi;O}^{T})(x)=\lim_{s\to\infty}\gamma_{\psi,U}^{\mathrm{H},\alpha}(x,s)\ge p$; given that $\mu_{\psi;O}^{T}(A)=1$, we are done.

\

Let, for each~$\alpha\in(0,1)$ and each $p,q,s,l\in\mathbb{N}$,
\[C^{\psi;O}_{\mathrm{\alpha Hc}}(p-1/q,s,l):=\{T\in X\mid\mu_{\psi;O}^{T}(A_{\psi;O}^{T}(p-1/q,s))\ge 1/l\},\]
where $A_{\psi;O}^{T}(p-1/q,s):=\{x\in O\mid \gamma^{\mathrm{H},\alpha}_{\psi,T}(x,s)\le p-1/q\}$. 
Thus, according to Claim 1 and~\eqref{igcon},
\[C^{\psi;O}_{0\mathrm{Hd}}=\bigcap_{k>1}\bigcap_{p\ge1}\bigcap_{q\ge1}\bigcap_{s\ge 1}\bigcap_{l\ge 1}(C^{\psi;O}_{\mathrm{(1/k)Hc}}(p-1/q,s,l))^c,
\]
and one just needs to show that, for each $\alpha\in(0,1)$ and each $r,s,l\in\mathbb{N}$, $C^{\psi;O}_{\mathrm{\alpha Hc}}(r,s,l)$ is closed in~$X$.

\

\textit{Claim 2.} For each $\delta>0$ and each $r,s\in\mathbb{N}$, $\{\mu\in\mathcal{M}_+(O)\mid \mu(Z_\mu(r,s))\ge\delta\}$ is a closed subset of $\mathcal{M}_+(O)$, where $Z_\mu(r,s)=\{x\in O\mid\beta^{\mathrm{H},\alpha}_{\mu}(x,s)\le r\}$.

Here, we use the fact that $O$ can be isometrically embedded in $\overline{O}$, which is a Polish metric space. Thus, any $\mu\in\mathcal{M}_+(O)$ can be identified with the measure $\tilde{\mu}\in \mathcal{M}_+(\overline{O})$ defined by $\tilde{\mu}(A)=\mu(A\cap O)$ for each $A\in\mathcal{B}(\overline{O})$, and $\mathcal{M}_+(O)$ can be identified with a subset of $\mathcal{M}_+(\overline{O})$, namely, the set $\{\tilde{\mu}\in \mathcal{M}_+(\overline{O})\mid\tilde{\mu}(\overline{O})=\tilde{\mu}(O)\}$.  Then, the induced topology  in $\mathcal{M}_+(O)$ by the Polish space $\mathcal{M}_+(\overline{O})$ coincides with the weak topology in $\mathcal{M}_+(O)$ (see Section~6 in~\cite{Oxtoby} for details).

Moreover, for each $\mu\in\mathcal{M}_+(O)$ and each $r,s\in\mathbb{N}$, $Z_\mu(r,s)=\{x\in\overline{O}\mid\beta^{\mathrm{H},\alpha}_{\tilde{\mu}}(x,s)\le r\}\cap O$, so for each $\delta>0$,
\[\{\mu\in\mathcal{M}_+(O)\mid \mu(Z_\mu(r,s))\ge\delta\}=\{\tilde{\mu}\in\mathcal{M}_+(\overline{O})\mid \tilde{\mu}(Z_{\tilde{\mu}}(r,s))=\mu(Z_\mu(r,s))\ge\delta\}\cap\mathcal{M}_+(O).\] 

The result is now a consequence of Lemma~\ref{LIMP}.

\

Recall that by the functional calculus, for each $0\neq\psi\in\mathcal{H}$,  the mapping $\zeta_\psi:X\rightarrow\mathcal{M}_+(O)$, $\zeta_\psi(T)=\mu_{\psi;O}^{T}$ is continuous (since convergence in~$X$ implies strong resolvent convergence), and note that 
for each $(x,s)\in O\times\mathbb{N}$, $\gamma^{\mathrm{H},\alpha}_{\psi,T}(x,s)=\beta^{\mathrm{H},\alpha}_{\zeta_\psi(T)}(x,s)$. 

Thus, it follows that for each $l,r,s\in\mathbb{N}$, $C^{\psi;O}_{\mathrm{\alpha Hc}}(r,s,l)=(\gamma_\psi)^{-1}(\mathcal{M}_{r,s}(1/l))$, and therefore, by Claim 2, $C^{\psi;O}_{\mathrm{\alpha Hc}}(r,s,l)$ is a closed subset of $X$.
\end{proof}



\section{Proof of Theorems~\ref{thmFDW} and~\ref{LPO}}
\label{sectHW1}
\begin{proof}(Theorem~\ref{thmFDW}) The result is a direct consequence of the hypotheses, Theorem~\ref{P2}, and the fact that the intersection of a countable family of generic sets is still a generic set.
\end{proof}

\

In order to prove Theorem~\ref{LPO}, we need the following result.

\begin{theorem}[Theorems~1.1 in~\cite{DG} and 1.3 in~\cite{DG1}]
  Suppose that~$\Omega$ is a Cantor group and that $\tau:\Omega\rightarrow\Omega$ is a minimal translation. Then, there exist dense sets of $g\in \CC(\Omega,\mathbb{R})$ such that, for each $\kappa\in\Omega$,
\begin{enumerate}
\item the spectrum of $H_{g,\tau}^\kappa$ is purely absolutely continuous;
\item the spectrum of $H_{g,\tau}^\kappa$ is zero-Hausdorff dimensional.
\end{enumerate}  
\label{TDG}
\end{theorem}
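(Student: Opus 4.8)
The plan is to prove both density statements by \emph{periodic approximation}, exploiting the structure forced by the hypotheses on $(\Omega,\tau)$. Since $\tau$ is a minimal translation, $\Omega$ is monothetic and totally disconnected, hence procyclic: one may write $\Omega=\varprojlim\mathbb{Z}/q_n\mathbb{Z}$ with $q_n\mid q_{n+1}$, and $\tau$ is translation by the canonical generator. The \emph{locally constant} sampling functions — those factoring through some finite quotient $\mathbb{Z}/q_n\mathbb{Z}$ — are dense in $\CC(\Omega,\mathbb{R})$ (Stone--Weierstrass, or directly from the inverse-limit structure). For such a $g$ the potential $V_n(\kappa)=g(\tau^n(\kappa))$ is periodic, so $H_{g,\tau}^\kappa$ is a periodic Jacobi matrix; by Floquet--Bloch theory it has purely absolutely continuous spectrum consisting of finitely many bands, and, by minimality of $\tau$, the spectrum $\sigma(H_{g,\tau}^\kappa)=:\Sigma_g$ is independent of $\kappa$. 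Both parts then construct $g$ as a uniform limit $g=g_0+\sum_{n\ge1}h_n$, where $g_0$ is a fixed periodic approximant of the prescribed target and $h_n$ is a periodic perturbation of rapidly growing period and sup-norm $\le\varepsilon\,2^{-n}$, so that $\|g-g_0\|_\infty$ is as small as desired; the freedom lies in \emph{how} the periods and amplitudes of the $h_n$ are chosen.

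For part (2) I would drive the band lengths to shrink super-polynomially. At stage $n$ the periodic operator with potential $g_0+\sum_{k\le n}h_k$ has spectrum $\Sigma^{(n)}$ formed of $O(q_n)$ closed bands; choosing $h_{n+1}$ of period $q_{n+1}\gg q_n$ one forces each band to split while the total, and maximal, band length decreases geometrically. Because the potentials converge uniformly, $\|g-(g_0+\sum_{k\le n}h_k)\|_\infty\le\eta_n:=\sum_{k>n}\|h_k\|_\infty\to0$, so the operators converge in norm and $\Sigma_g$ lies in the $\eta_n$-neighbourhood of $\Sigma^{(n)}$; thus the bands of $\Sigma^{(n)}$, each enlarged by $\eta_n$, form a $\delta_n$-covering of $\Sigma_g$. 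The estimate to secure is $\#\{\mathrm{bands}\}\cdot(\max\mathrm{\ bandwidth})^\alpha\to0$ for every $\alpha>0$; arranging $q_{n+1}$ and $\|h_{n+1}\|_\infty$ so that the maximal bandwidth decays faster than any inverse power of the band count yields $h^\alpha(\Sigma_g)=0$ for all $\alpha>0$, i.e. $\dim_{\mathrm H}(\Sigma_g)=0$. This is automatically uniform in $\kappa$, since $\Sigma_g$ is $\kappa$-independent.

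For part (1) I would instead keep the spectrum \emph{thick}: arrange the perturbations so that the Lyapunov exponent $L(E)$ stays identically zero on the spectra $\Sigma^{(n)}$ and that the transfer matrices remain uniformly bounded along the approximation (an almost-reducibility / KAM-type scheme). Passing to the limit, $\bigcap_n\Sigma^{(n)}=\Sigma_g$ retains positive Lebesgue measure with $L\equiv0$ on it; Kotani theory for ergodic operators then gives absolutely continuous spectrum of positive measure, while the uniform transfer-matrix bounds rule out point and singular continuous spectrum, yielding \emph{purely} absolutely continuous spectrum. Finally, because limit-periodic potentials are uniformly almost periodic — the family $\{H_{g,\tau}^\kappa\}_{\kappa\in\Omega}$ is minimal and uniquely ergodic — the conclusion upgrades from ``almost every $\kappa$'' to ``every $\kappa$''.

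The main obstacle in both parts is the quantitative control of the periodic approximation. For (2) it is the gap-opening/band-splitting estimate, bounding the bandwidths of the higher-period approximant (via the discriminant of the periodic problem and its perturbation) fast enough to beat the growth of the band count. For (1) it is the harder analytic core: maintaining a vanishing Lyapunov exponent together with bounded transfer matrices through infinitely many perturbations, which is precisely the almost-reducibility argument carried out in~\cite{DG,DG1}; this is where the bulk of the work resides, and is the reason the statement is imported here rather than reproved from scratch.
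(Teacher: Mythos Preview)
The paper does not prove this theorem at all: it is stated with explicit attribution to Theorems~1.1 in~\cite{DG} and~1.3 in~\cite{DG1} and then immediately used as input to the proof of Theorem~\ref{LPO}. There is no ``paper's own proof'' to compare against, and you correctly recognize this in your final paragraph.

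Your sketch is a reasonable outline of the ideas behind the cited results, with one caveat. For part~(1), the Damanik--Gan argument in~\cite{DG} is considerably more elementary than the almost-reducibility/KAM scheme you invoke: they show that when the periodic approximants converge \emph{sufficiently fast} (exponentially in the period), explicit transfer-matrix estimates inherited from the periodic problem survive in the limit and force purely absolutely continuous spectrum directly, without any Kotani-theory or small-divisor machinery. The KAM/almost-reducibility route is closer to Avila's work~\cite{Avila}, not~\cite{DG}. For part~(2), your covering-by-shrinking-bands picture is the right mechanism; in~\cite{DG1} the bandwidth control is obtained through a uniform positive-Lyapunov-exponent estimate (hence the title of that paper), which then feeds into exactly the kind of $\#\{\text{bands}\}\cdot(\text{max bandwidth})^\alpha\to0$ estimate you describe.
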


\begin{proof}(Theorem~\ref{LPO})
Fix $\kappa\in\Omega$ and let $\tau:\Omega\rightarrow\Omega$ be a minimal translation of the Cantor group~$\Omega$. 

It follows from Theorem~\ref{TDG} that each of the sets  $C_{\mathrm{1Pd}}^\kappa\supset C_{\mathrm{ac}}^\kappa:=\{T\in X_\kappa\mid\sigma(T)$ is purely absolutely continuous$\}$ and $C_{\mathrm{0Hd}}^\kappa$ is dense in $X_\kappa$. 

The result is now a consequence of Theorem~\ref{thmFDW} and Remark~\ref{RIMP}. 
%
\end{proof}

 \subsection*{Acknowledgments} SLC thanks the partial support by FAPEMIG (Universal Project CEX-APQ-00554-13). CRdO  thanks the partial support by CNPq  (under contract 303503/2018-1).

\end{document}